\DeclareMathAlphabet{\mathpzc}{OT1}{pzc}{m}{it} 
 \tikzset{
  on each segment/.style={
    decorate,
    decoration={
      show path construction,
      moveto code={},
      lineto code={
        \path [#1]
        (\tikzinputsegmentfirst) -- (\tikzinputsegmentlast);
      },
      curveto code={
        \path [#1] (\tikzinputsegmentfirst)
        .. controls
        (\tikzinputsegmentsupporta) and (\tikzinputsegmentsupportb)
        ..
        (\tikzinputsegmentlast);
      },
      closepath code={
        \path [#1]
        (\tikzinputsegmentfirst) -- (\tikzinputsegmentlast);
      },
    },
  },
  mid arrow/.style={postaction={decorate,decoration={
        markings,
        mark=at position .5 with {\arrow[#1]{stealth}}
      }}},
}
\numberwithin{figure}{section}
\newtheorem{theorem}{Theorem}[section]
\newtheorem{lemma}[theorem]{Lemma}
\newtheorem{corollary}[theorem]{Corollary}
\newtheorem{main theorem}[theorem]{Main Theorem}
\newtheorem{proposition}[theorem]{Proposition}
\newtheorem{definition}[theorem]{Definition}
\newtheorem{remark}[theorem]{Remark}
\newtheorem{example}[theorem]{Example}
\numberwithin{equation}{section}
\def\<{\langle} 
\def\>{\rangle} 
\newcommand{\Pic}{Figure\ }
\newcommand{\modcat}{\mathsf{mod}}
\newcommand{\ind}{\mathsf{ind}}
\newcommand{\str}{\mathsf{Str}}
\newcommand{\band}{\mathsf{Band}}
\newcommand{\kk}{\mathds{k}} 
\newcommand{\Q}{\mathcal{Q}} 
\newcommand{\I}{\mathcal{I}} 
\newcommand{\M}{\mathds{M}}
\newcommand{\tA}{\tilde{\mathbb{A}}} 
\newcommand{\A}{\mathbb{A}} 
\renewcommand{\t}{\mathfrak{t}}
\newcommand{\s}{\mathfrak{s}}
\newcommand{\lu}{\mathrm{lu}}
\newcommand{\ru}{\mathrm{ru}}
\newcommand{\ld}{\mathrm{ld}}
\newcommand{\rd}{\mathrm{rd}}
\renewcommand{\d}{\mathrm{d}}
\renewcommand{\u}{\mathrm{u}}
\newcommand{\inner}{\mathrm{in}}
\newcommand{\out}{\mathrm{out}}
\newcommand{\id}{\mathrm{inj.dim}}
\newcommand{\coker}{\mathrm{coker}}
\newcommand{\Left}{\mathrm{L}}
\newcommand{\Right}{\mathrm{R}}
\newcommand{\EE}{\mathds{E}}
\renewcommand{\top}{\mathrm{top}}
\newcommand{\soc}{\mathrm{soc}}
\newcommand{\frakR}{\mathfrak{R}}
\newcommand{\ELIS}{\mathrm{ELIS}}
\newcommand{\lmpset}{\mathrm{LMP}}
\newcommand{\length}{\mathfrak{l}}
\newcommand{\To}[1]{\mathop{-\!\!-\!\!\!\to}\limits^{#1}}
\def\defines{\it}
\newcommand{\checks}[1]{{\color{black}{#1}}}
\newcommand{\pnotation}{\checks{\widehat{p}}}
\newcommand{\nota}{\widehat{p}}
\begin{document}

\title[On the Gorensteiness of string algebras]{On the Gorensteiness of string algebras}
\thanks{MSC2020: 16G10, 16E45.}
\thanks{Key words: string algebra; self-injective dimension; effective intersecting relation; Gorenstein algebra}
\author{Houjun Zhang}
\address{Houjun Zhang, School of Science, Nanjing University of Posts and Telecommunications, Nanjing 210023, P. R. China}
\email{zhanghoujun@njupt.edu.cn}
\author{Dajun Liu}
\address{School of Mathematics-Physics and Finance, Anhui Polytechnic University, Wuhu, China 241000, P. R. China}
\email{liudajun@ahpu.edu.cn}
\author{Yu-Zhe Liu}
\address{School of Mathematics and Statistics, Guizhou University, Guiyang 550025, P. R. China}
\email{yzliu3@163.com}

\begin{abstract}
In this paper,  we give a description of the self-injective dimension of string algebras
and obtain a necessary and sufficient condition for a string algebra to be Gorenstein.
\end{abstract}

\maketitle
\section{Introduction}
Throughout this paper $\kk$ is a field. Let $A$ be a finite-dimensional $\kk$-algebra. Recall that $A$ is called a Gorenstein algebra if $A$ has finite injective dimension both as a left and a right $A$-module. Gorenstein algebra \checks{arises} from commutative ring theory and \checks{plays} a central role in \checks{the} representation theory of finite-dimensional algebras. Being Gorenstein algebra has nice properties, such as:
if $A$ is Gorenstein, then the bounded homology categories $K^{b}(\mathrm{proj} A)$ of projectives and $K^{b}(\mathrm{inj} A)$ of injectives are coincide. Moreover, Gorenstein algebra is preserved under derived equivalence \cite{H1991}.

It is well known that the self-injective algebras and the algebras of finite global dimension are Gorenstein. Recently, Gei\ss\ and Retein showed that gentle algebras are Gorenstein \cite{GR2005}; they also pointed out that skewed-gentle algebras \checks{considered} in \cite{Geisdela99} are Gorenstein if the field is not of characteristic 2. Gentle algebras introduced by Assem and Skowro\'{n}ski \cite{AS1987} form a particular class of string algebras. However, there are examples showed that string algebras are not Gorenstein. In this paper, we study when a string algebra is Gorenstein.

String algebras were introduced by Butler and Ringel in \cite{BR1987}. They \checks{gave} an explicit description of the finite-dimensional indecomposable modules and the irreducible maps between them and provided a method to draw the Auslander-Reiten quivers of string algebras. In order to study the Gorensteinness of string algebra, we first give a description of the self-injective dimension of string algebra by using the description of the finite-dimensional indecomposable modules in \cite{BR1987}.

Let $A$ be a finite-dimensional $\kk$-algebra.  We denote by $\id ({_AA})$ and $\id (A_A)$ the left injective dimension and the right injective dimension of $A$ respectively.
\checks{By \cite[Corollary 2.4]{GKK91}, we have that the finitistic dimension conjecture holds true for string algebras since any string algebra is monomial.
And so, the Gorenstein symmetry conjecture holds true for string algebras by \cite{Hu1995}.
In \cite[Proposition 6.10]{AR1991Nakayama}, Auslander and Reiten have proved that if $\id ({A_A})<\infty$, then $\id ({_AA})<\infty$ if and only if the finitistic dimension of $A$ is finite.
It follows that $\id({_AA}) = \id({A_A})$ since an algebra is a string algebra if and only if so is its opposite algebra. }
Thus, in this paper, we consider the right $A$-modules and the right injective dimension of $A$. For simplicity, we denote by $\id({A})=\id({A_A}) = \id({_AA})$.

Now assume that $\Q = (\Q_0, \Q_1, \s, \t)$ is a finite quiver, where $\s, \t: \Q_1 \to \Q_0$ are
two functions sending arrow $\alpha\in \Q_1$ to its starting point $\s(\alpha)$ and ending point $\t(\alpha)$.
Let $A=\kk\Q/\I$ be a string algebra (see Definition \ref{def:string}).
To compute the injective dimension of $A$, we define the intersecting relations for a string algebra.
Notice that the injective dimension of $A$ is determined by some intersecting relations.
Thus, we introduce ELIS the effective left (and right) intersecting relations
with respect to a projective module (see Definition \ref{def:ELIS}).
Denote by $P(v_0)$ the projective module of vertex $v_0$ in $\Q$.
Let $\{r_i\}_{i=1}^{n}=\{r_n,\ldots,r_1\}$ be an ELIS with respect to $P(v_0)$
and define $\length(\{r_i\}_{i=1}^n)=n$ as the length of $\{r_i\}_{i=1}^{n}$.
Furthermore, the length of $\ELIS(P(v_0))$ can be define as:
\[\length(\ELIS(P(v_0))) :=
\begin{cases}
\sup\limits_{\{r_i\}_{i=1}^{n} \in \ELIS(P(v_0))} \length(\{r_i\}_{i=1}^{n}), & \text{ if $\ELIS(P(v_0)) \ne \varnothing$; } \\
\quad\quad\quad\quad\quad 0, &\text{ if $\ELIS(P(v_0)) = \varnothing$. }
\end{cases}\]
Then we obtain the main result of this paper:
\begin{theorem}
Let $A=\kk\Q/\I$ be a string algebra. If $A$ is not self-injective, then
\[ \id A = \sup_{v_0\in \Q_0} \length(\ELIS(P(v_0))) + 1. \]
\end{theorem}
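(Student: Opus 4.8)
The plan is to reduce the computation of $\id A$ to the indecomposable projectives and then resolve each of them injectively in a purely combinatorial way. Since $A_A = \bigoplus_{v_0 \in \Q_0} P(v_0)$ and injective dimension commutes with finite direct sums, we have $\id A = \sup_{v_0 \in \Q_0} \id P(v_0)$. Because $A$ is not self-injective, at least one $P(v_0)$ fails to be injective, so this supremum is at least $1$ and is attained at a vertex whose projective is not injective. It therefore suffices to establish the per-vertex identity
\[ \id P(v_0) = \length(\ELIS(P(v_0))) + 1 \qquad \text{whenever } P(v_0) \text{ is not injective,} \]
together with the fact that $\ELIS(P(v_0)) = \varnothing$ (hence of length $0$) when $P(v_0)$ \emph{is} injective. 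Granting these, the supremum is realized at a non-injective projective, the injective vertices contribute only length $0$, and so the $+1$ may be pulled outside the supremum, yielding the stated formula.

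To establish the per-vertex identity I would build the minimal injective coresolution $0 \to P(v_0) \to I^0 \to I^1 \to \cdots$ explicitly. By the Butler--Ringel description \cite{BR1987}, $P(v_0)$ is a string module and its injective envelope $I^0$ is the direct sum of the injective hulls of the (at most two) simple summands of $\soc P(v_0)$, each of which is again a string module. The first crucial step is to show that every cosyzygy $\Omega^{-k}P(v_0)$ remains a direct sum of string modules, and to give the explicit rule producing the string of $\Omega^{-k}P(v_0)$ from that of $\Omega^{-(k-1)}P(v_0)$: one embeds each indecomposable string summand into its injective hull and reads off the cokernel, which amounts to deleting the image substring from the larger injective string and retaining the remaining pieces. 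This renders the entire coresolution computable from the combinatorics of strings and the relations in $\I$.

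The heart of the argument is to match this combinatorial coresolution with the effective intersecting relations of Definition \ref{def:ELIS}. I would show that passing from $\Omega^{-(k-1)}P(v_0)$ to $\Omega^{-k}P(v_0)$ is governed by a relation $r_k \in \I$ that intersects the current string, and that the cosyzygy $\Omega^{-k}P(v_0)$ is non-injective --- i.e. the coresolution genuinely continues --- precisely when $r_k$ is \emph{effective} in the sense of ELIS. Tracking this step by step extracts from the minimal injective coresolution a chain $\{r_i\}_{i=1}^{n}$ of effective intersecting relations, and conversely every ELIS chain arises in this way; the coresolution terminates exactly when no further effective relation is available, that is, when the last cosyzygy is injective. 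Consequently the number $n$ of non-injective cosyzygies equals $\length(\ELIS(P(v_0)))$, while the terminal injective cosyzygy accounts for the remaining step, giving $\id P(v_0) = n + 1$.

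The main obstacle is the branching inherent in string combinatorics. At a vertex where the current module has two legs, or where an injective string has two arms, the cosyzygy splits as a direct sum of several string modules, so the effective intersecting relations organize into a branching family rather than a single chain; one must then verify that the supremum defining $\length(\ELIS(P(v_0)))$ over all such chains equals the length of the longest branch of the coresolution, and simultaneously that the distinction between $\ELIS$ and $\ERIS$ correctly records the two directions in which a cosyzygy can propagate. A secondary technical point is to confirm that the combinatorially constructed coresolution is genuinely minimal, so that its length equals $\id P(v_0)$; this reduces to checking that at each stage the cokernel of the embedding into the injective hull contains no nonzero injective direct summand, which is again a local condition on strings and relations.
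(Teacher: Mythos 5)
Your proposal follows essentially the same route as the paper: reduce to the per-vertex claim $\id P(v_0)=\length(\ELIS(P(v_0)))+1$ for non-injective $P(v_0)$, compute the minimal injective coresolution combinatorially so that every cosyzygy is a direct sum of string modules on directed strings, and show the coresolution continues past step $k$ exactly when the $k$th intersecting relation is effective --- which is precisely the content of the paper's Propositions \ref{prop:0-cosyzygy-I} and \ref{prop:distance-proj case} and Lemma \ref{lemm:ELIS and cosyzygy}. The two pivotal verifications you defer (the explicit string-theoretic form of each cosyzygy, and the equivalence between non-injectivity of $\mho_{k}$ and effectiveness of $r_k$, including the branching and minimality issues you flag) are exactly what the paper supplies in Section 3, so your outline matches the paper's argument in structure and in the supporting results it requires.
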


According to the above Theorem,  we immediately obtain a necessary and sufficient condition for a string algebra to be Gorenstein.

\begin{corollary}
Let $A=\kk\Q/\I$ be a string algebra. Then $A$ is Gorenstein if and only if all $\mathrm{ELISs}$ of $P(v_0)$ have a supremum for any vertex $v_0$ in $\Q_0$.
\end{corollary}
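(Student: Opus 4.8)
The plan is to read off the corollary from the Main Theorem, using the reduction recorded in the introduction that a string algebra $A$ is Gorenstein precisely when $\id A<\infty$ (recall that $\id({_AA})=\id({A_A})=\id A$ for string algebras, so a single finite injective dimension already certifies Gorensteinness). I interpret the phrase ``all $\mathrm{ELISs}$ of $P(v_0)$ have a supremum'' as the statement that the set of lengths $\{\length(\{r_i\}_{i=1}^{n})\mid \{r_i\}_{i=1}^{n}\in\ELIS(P(v_0))\}$ is bounded above, equivalently that $\length(\ELIS(P(v_0)))<\infty$. With this reading the whole argument reduces to translating the finiteness of $\id A$ into the finiteness of these local suprema.

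First I would treat the generic case in which $A$ is not self-injective, where the Main Theorem applies verbatim and gives $\id A=\sup_{v_0\in\Q_0}\length(\ELIS(P(v_0)))+1$. Hence $A$ is Gorenstein if and only if $\sup_{v_0\in\Q_0}\length(\ELIS(P(v_0)))<\infty$. Because $\Q$ is a finite quiver, $\Q_0$ is a finite set, and the supremum of a finite family of elements of $[0,\infty]$ is finite exactly when each member is finite; thus the global supremum is finite if and only if $\length(\ELIS(P(v_0)))<\infty$ for every $v_0\in\Q_0$, which is exactly the asserted ELIS condition. This settles the equivalence whenever $A$ is not self-injective.

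The remaining, and in my view most delicate, point is the self-injective case, which the Main Theorem explicitly excludes. Here $\id A=0$, so $A$ is automatically Gorenstein and the left-hand side of the biconditional holds; to keep the biconditional honest I must check that the ELIS condition also holds. The natural mechanism is that self-injectivity makes each $P(v_0)$ injective, so that $\ELIS(P(v_0))=\varnothing$ and $\length(\ELIS(P(v_0)))=0<\infty$ for every vertex $v_0$; I would verify this directly from Definition \ref{def:ELIS}. Confirming that the excluded self-injective case does not secretly produce an unbounded $\ELIS(P(v_0))$ is the step I expect to require the most care, since it is precisely the situation the main formula is not allowed to cover. Once both cases are in hand, combining them yields the corollary in full generality.
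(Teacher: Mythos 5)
Your proposal is correct and in essence takes the paper's route: the corollary is read off from the injective-dimension formula. The organizational difference is that the paper never needs the global formula $\id A=\sup_{v}\length(\ELIS(P(v)))+1$ for this step: inside the proof of Theorem \ref{thm:main} it first establishes the per-vertex identity $\id P(v)=\length(\ELIS(P(v)))+1$ for non-injective $P(v)$ and $\id P(v)=0$ for injective $P(v)$, and then both the dimension formula and the Gorenstein criterion fall out together. Since Gorensteinness is exactly the statement that every $\id P(v)$ is finite, the per-vertex version needs no case split, whereas your route through the global supremum (valid only for non-self-injective $A$) forces you to treat the self-injective case separately. That patch is the one place to be careful: your mechanism ``$P(v_0)$ injective $\Rightarrow \ELIS(P(v_0))=\varnothing$'' is stronger than what the definitions give. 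A connected self-injective string algebra is $\kk$, or $\kk[x]/(x^m)$, or a cyclic Nakayama algebra $\kk\tA_n/\mathrm{rad}^m$, and in the latter cases the initial relation $r^{\u}_{\widehat{p}}(\cdot)$ above the socle of $P(v_0)$ certainly exists, so the ELIS set need not be empty; what saves you is that the candidate $r_2$ prescribed by condition (b) ends exactly at $\s(r_1)$ and therefore fails to left-intersect $r_1$, so every ELIS has bounded length and $\length(\ELIS(P(v_0)))<\infty$, which is all the corollary requires. With ``empty'' weakened to ``of finite length'' your argument is complete, and your final step --- a finite family of suprema is bounded if and only if each member is --- is exactly the paper's concluding observation.
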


This paper is organized as follows. In Section 2, we recall some preliminaries on string algebras. Section 3 is devoted to study the cosyzygies of indecomposable modules for string algebras. Based on the injective decomposition of projective modules, we define the effective left and right intersecting relations with respect to a projective module. In the last Section, we give the proof of the main results.

Let $\Q$ be a finite quiver. For arbitrary two arrows $\alpha$ and $\beta$ of the quiver $\Q$,
if $\t(\alpha)=\s(\beta)$, then the composition of $\alpha$ and $\beta$ is denoted by $\alpha\beta$. Assuming $A$ is a finite-dimensional algebra, we denote by $\modcat(A)$ the category of right $A$-modules.
For arbitrary two modules $M$ and $N$, if $M$ is a direct summand of $N$, then we denote by $M\le_{\oplus} N$.
Moreover, we denote by $S(v)$, $P(v)$, and $E(v)$ the simple, indecomposable projective and indecomposable injective modules corresponding to the vertex $v\in \Q_0$, respectively.

\section{String algebras}
In this section, we recall the definition and some properties of string algebras. We refer the reader to \cite{BR1987} for more detail. Throughout this paper, we always assume that $\Q$ is a finite connected quiver.

\begin{definition}\rm
Let $\Q$ be a finite quiver and $\I$ an admissible ideal of $\kk\Q/\I$.
The pair $(\Q, \I)$ is said to be a {\defines string quiver} if it satisfies the following conditions:
\begin{itemize}
  \item[(1)] any vertex of $\Q$ is the source and target of at most two arrows;

  \item[(2)] for each arrow $\beta$, there is at most one arrow $\gamma$ such that $\beta\gamma\notin\I$;

  \item[(3)] for each arrow $\beta$, there is at most one arrow $\alpha$  such that $\alpha\beta\notin\I$;

  \item[(4)] $\I$ is generated by paths of length \checks{greater} than or equal to $2$.
\end{itemize}
\end{definition}

\begin{definition}\rm \label{def:string}
Let $(\Q, \I)$ be a string quiver. A finite-dimensional $\kk$-algebra $A$ is called a {\defines string algebra}
if it is Morita equivalent to $\kk\Q/\I$.
\end{definition}

Let $A=\kk\Q/\I$ be a string algebra and $v_0 \in \Q_0$. By the definition, we obtain that there are eight types for $v_0$.
\begin{figure}[H]
\begin{tikzpicture}[scale=1]
\draw (0,0) node{$v_0$};
\draw (-1, 1) node{$v_{\lu}$};
\draw ( 1, 1) node{$v_{\ru}$};
\draw ( 1,-1) node{$v_{\rd}$};
\draw (-1,-1) node{$v_{\ld}$};
\draw[->] (-0.8, 0.8) -- (-0.2, 0.2); \draw (-0.4, 0.4) node[left]{$a_{\lu,0}$};
\draw[->] ( 0.8, 0.8) -- ( 0.2, 0.2); \draw ( 0.4, 0.4) node[right]{$a_{\ru,0}$};
\draw[->] ( 0.2,-0.2) -- ( 0.8,-0.8); \draw ( 0.4,-0.4) node[right]{$a_{0,\rd}$};
\draw[->] (-0.2,-0.2) -- (-0.8,-0.8); \draw (-0.4,-0.4) node[left]{$a_{0,\ld}$};
\draw (0,-1.5) node{Type $(2^{\inner}, 2^{\out})$};
\end{tikzpicture}
\ \
\begin{tikzpicture}[scale=1]
\draw (0,0) node{$v_0$};
\draw (-1, 1) node{$v_{\lu}$};
\draw ( 1, 1) node{$v_{\ru}$};
\draw ( 0,-1) node{$v_{\d}$};
\draw[->] (-0.8, 0.8) -- (-0.2, 0.2); \draw (-0.4, 0.4) node[left]{$a_{\lu,0}$};
\draw[->] ( 0.8, 0.8) -- ( 0.2, 0.2); \draw ( 0.4, 0.4) node[right]{$a_{\ru,0}$};
\draw[->] ( 0.0,-0.2) -- ( 0.0,-0.8); \draw ( 0.0,-0.4) node[right]{$a_{0,\d}$};
\draw (0,-1.5) node{Type $(2^{\inner}, 1^{\out})$};
\end{tikzpicture}
\ \ \ \
\begin{tikzpicture}[scale=1]
\draw (0,0) node{$v_0$};
\draw ( 0, 1) node{$v_{\u}$};
\draw ( 1,-1) node{$v_{\rd}$};
\draw (-1,-1) node{$v_{\ld}$};
\draw[->] ( 0.0, 0.8) -- ( 0.0, 0.2); \draw ( 0.0, 0.4) node[left]{$a_{\u,0}$};
\draw[->] ( 0.2,-0.2) -- ( 0.8,-0.8); \draw ( 0.4,-0.4) node[right]{$a_{0,\rd}$};
\draw[->] (-0.2,-0.2) -- (-0.8,-0.8); \draw (-0.4,-0.4) node[left]{$a_{0,\ld}$};
\draw (0,-1.5) node{Type $(1^{\inner}, 2^{\out})$};
\end{tikzpicture}
\ \ \ \
\begin{tikzpicture}[scale=1]
\draw (0,-1) node{$v_0$};
\draw (-1, 0) node{$v_{\lu}$};
\draw ( 1, 0) node{$v_{\ru}$};
\draw[->] (-0.8, -0.2) -- (-0.2, -0.8); \draw (-0.4, -0.6) node[left]{$a_{\lu,0}$};
\draw[->] ( 0.8, -0.2) -- ( 0.2, -0.8); \draw ( 0.4, -0.6) node[right]{$a_{\ru,0}$};
\draw (0,-1.5) node{Type $(2^{\inner}, 0^{\out})$};
\end{tikzpicture}
\end{figure}

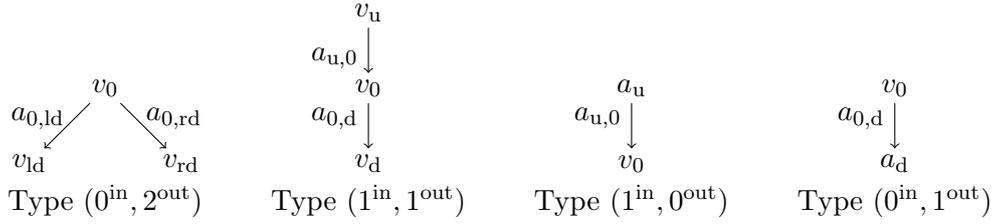
\begin{figure}[H]
\begin{tikzpicture}[scale=1]
\draw (0,0) node{$v_0$};
\draw ( 1,-1) node{$v_{\rd}$};
\draw (-1,-1) node{$v_{\ld}$};
\draw[->] ( 0.2,-0.2) -- ( 0.8,-0.8); \draw ( 0.4,-0.4) node[right]{$a_{0,\rd}$};
\draw[->] (-0.2,-0.2) -- (-0.8,-0.8); \draw (-0.4,-0.4) node[left]{$a_{0,\ld}$};
\draw (0,-1.5) node{Type $(0^{\inner}, 2^{\out})$};
\end{tikzpicture}
\ \ \ \
\begin{tikzpicture}[scale=1]
\draw (0,0) node{$v_0$};
\draw ( 0, 1) node{$v_{\u}$};
\draw ( 0,-1) node{$v_{\d}$};
\draw[->] ( 0.0, 0.8) -- ( 0.0, 0.2); \draw ( 0.0, 0.4) node[left]{$a_{\u,0}$};
\draw[->] (-0.0,-0.2) -- (-0.0,-0.8); \draw (-0.0,-0.4) node[left]{$a_{0,\d}$};
\draw (0,-1.5) node{Type $(1^{\inner}, 1^{\out})$};
\end{tikzpicture}
\ \ \ \
\begin{tikzpicture}[scale=1]
\draw (0,-1) node{$v_0$};
\draw ( 0, 0) node{$a_{\u}$};
\draw[->] ( -0.0, -0.2) -- ( -0.0, -0.8); \draw ( 0.0,- 0.4) node[left]{$a_{\u,0}$};
\draw (0,-1.5) node{Type $(1^{\inner}, 0^{\out})$};
\end{tikzpicture}
\ \ \ \
\begin{tikzpicture}[scale=1]
\draw (0,0) node{$v_0$};
\draw ( 0,-1) node{$a_{\d}$};
\draw[->] (-0.0,-0.2) -- (-0.0,-0.8); \draw ( 0.0,-0.4) node[left]{$a_{0,\d}$};
\draw (0,-1.5) node{Type $(0^{\inner}, 1^{\out})$};
\end{tikzpicture}
\caption{The vertex $v_0$ in the bound quiver of string algebra. }
\label{fig:vertex}
\end{figure}

For any arrow $a\in \Q_1$, we denote by $a^{-1}$ the formal inverse of $a$. Then $\s(a^{-1})=\t(a)$ and $\t(a^{-1})=\s(a)$.
We denote by $\Q_1^{-1}:=\{a^{-1}\mid a\in \Q_1\}$ the set of all formal inverses of arrows. Any path $p=a_1a_2\cdots a_\ell$ in $(\Q, \I)$ naturally provides a formal inverse path $p^{-1} = a_{\ell}^{-1}a_{\ell-1}^{-1}\cdots a_1^{-1}$ of $p$. For any trivial path $e_v$ corresponding to $v\in \Q_0$, we define $e_v^{-1} = e_v$.

\begin{definition}\rm
Let $A=\kk\Q/\I$ be a string algebra.

(1) A {\defines string} over $(\Q, \I)$ is a sequence $s=(p_1, p_2, \ldots, p_n)$ such that:
\begin{itemize}
  \item 
    for any $1\le i\le n$, $p_i$ or $p_i^{-1}$ is a path in $(\Q, \I)$ \checks{whose length is greater than or equal to $1$};

  \item 
    if $p_i$ is a path, then $p_{i+1}$ is a formal inverse path;

  \item 
    $\t(p_{i})=\s(p_{i+1})$ holds for all $1\le i\le n-1$.
\end{itemize}

\checks{In particular, if $n=0$, that is, $s$ is an empty set, then it is called a {\defines trivial string}.
A trivial string is always used to describe the module $0$.
A string $s$ is called a {\defines directed string} if $s$ is either a trivial string or a path.}

(2) A {\defines band} $b=(p_1, p_2, \ldots, p_n)$ is a string such that:
  \begin{itemize}
    \item 
      $\t(p_n)=\s(p_1)$ and $p_np_1\notin \I$;

    \item 
      $b$ is not a non-trivial power of some string, i.e.,
      there is no string $s$ such that $b=s^m$ for some $m\ge 2$.
  \end{itemize}
\end{definition}

Two strings $s$ and $s'$ are called {\defines equivalent} if $s'=s$ or $s'=s^{-1}$;
Two bands $b=\alpha_1\cdots\alpha_n$ and $b'=\alpha_1'\cdots\checks{\alpha_n'}$ are called {\defines equivalent} if $b[t]=b'$ or $b[t]^{-1}=b'$, where
\[\checks{
b[t]=\begin{cases}
 \alpha_{1+t}\cdots\alpha_{n}\alpha_1\cdots\alpha_{1+t-1}, & 1\le t\le n-1; \\
 \alpha_1\cdots\alpha_n = b, & t=0.
\end{cases}
}\]
We denote by $\str(A)$ the set of all equivalent classes of strings and by $\band(A)$ the set of all equivalent classes of bands on the bound quiver of $A$, respectively. In \cite{BR1987}, Butler and Ringel showed that all indecomposable modules over a string algebra can be described by strings and bands. They proved that there is a bijection
\[ \M: \str(A) \cup (\band(A)\times\mathscr{J}) \to \ind(\modcat(A)), \]
where $\mathscr{J}$ is the set of all indecomposable $\kk[x,x^{-1}]$-modules.
Usually, if $\M^{-1}(N)$ is a string, then we say $N$ is a {\defines string module}. If $\M^{-1}(N)$ is a band with some pair $(n,\lambda)$, we say it is a {\defines band module}. The original definition of string and band modules over string algebra can be referred to \cite{BR1987}.

Now let $s=a^{-1}_{r}\cdots a^{-1}_2a^{-1}_1 b_1b_2\cdots b_t$ be a string with $r,t\ge 0$, $a^{-1}_1, \ldots, a^{-1}_r \in \Q_1^{-1}$ and $b_1,\ldots,b_t \in \Q_1$. If it satisfies the following conditions:
\begin{itemize}
  \item[(1)] $\t(a^{-1}_1)=\s(b_1)$,
  \item[(2)] for any $\alpha \in \Q_1$ with $\t(a_r)=\s(\alpha)$, $a_{r'}a_{r'+1}\cdots a_r\alpha \in \I$ for some $1\le r'\le r$, and
  \item[(3)] for any $\beta \in \Q_1$ with $\t(b_t)=\s(\beta)$, $b_{t'}b_{t'+1}\cdots b_t\beta \in \I$ for some $1\le t'\le t$.
\end{itemize}
Then $\M(s)$ is an indecomposable projective $A$-module, we call that $s$ is a projective string in this case.
Dually, we can define any indecomposable injective string.

\checks{\begin{example} \label{exp:str alg} \rm
Let $A$ be the path algebra given by the following quiver
\begin{center}
\begin{tikzpicture}[scale=0.5]
\draw[->][rotate=  0+10] (2,0) arc (0:25:2);
\draw[->][rotate= 45+10] (2,0) arc (0:25:2);
\draw[->][rotate= 90+10] (2,0) arc (0:25:2);
\draw[->][rotate=135+10] (2,0) arc (0:25:2);
\draw[->][rotate=180+10] (2,0) arc (0:25:2);
\draw[->][rotate=225+10] (2,0) arc (0:25:2);
\draw[->][rotate=270+10] (2,0) arc (0:25:2);
\draw[->][rotate=315+10] (2,0) arc (0:25:2);
\draw[rotate=  0] (2,0) node{$1$}; \draw[rotate=  0] (2.25,0.85) node{$a$};
\draw[rotate= 45] (2,0) node{$2$}; \draw[rotate= 45] (2.25,0.85) node{$b$};
\draw[rotate= 90] (2,0) node{$3$}; \draw[rotate= 90] (2.25,0.85) node{$c$};
\draw[rotate=135] (2,0) node{$4$}; \draw[rotate=135] (2.25,0.85) node{$d$};
\draw[rotate=180] (2,0) node{$5$}; \draw[rotate=180] (2.25,0.85) node{$e$};
\draw[rotate=225] (2,0) node{$6$}; \draw[rotate=225] (2.25,0.85) node{$f$};
\draw[rotate=270] (2,0) node{$7$}; \draw[rotate=270] (2.25,0.85) node{$g$};
\draw[rotate=315] (2,0) node{$8$}; \draw[rotate=315] (2.25,0.85) node{$h$};
\draw[rotate= 45][->] (2.3,0) -- (4,0) node[right]{$9$};
\draw[rotate= 45] (2.5,0) node[right]{$x$};
\draw[rotate= 45+180][->] (2.3,0) -- (4,0) node[left]{$10$};
\draw[rotate= 45+180] (2.5,0) node[left]{$y$};
\draw[rotate= 45+5][->] (4.3,0) arc (0:160:4.3);
\draw (-3,3) node[left]{$z$};
\draw[rotate=  0][dotted] (2,0) to[out=135,in=45] (-2,0);
\draw[rotate= 90][dotted] (2,0) to[out=135,in=45] (-2,0);
\draw[rotate=180][dotted] (2,0) to[out=135,in=45] (-2,0);
\draw[rotate=270][dotted] (2,0) to[out=135,in=45] (-2,0);
\draw[rotate=405][dotted] (2,0) to[out=135,in=45] (-2,0);
\draw[rotate=585][dotted] (2,0) to[out=135,in=45] (-2,0);
\draw[dotted][rotate=45] (2,-1) -- (3,0);
\draw[dotted][rotate=45+180] (2,-1) -- (3,0);
\end{tikzpicture}
\end{center}
with $\I = \langle abcd, bcde, cdef, efgh, fgha, ghab, ax, ey \rangle$.
Then it is a string algebra. Let $P(2)$ be the projective module of vertex $2$,
 then the equivalent class of the string corresponding to $P(2)$ is
$\M^{-1}(P(2)) = \{z^{-1}x^{-1}bcd, d^{-1}c^{-1}b^{-1}xz\}$.
For simplicity, we do not discriminate between $z^{-1}x^{-1}bcd$ and $d^{-1}c^{-1}b^{-1}xz$.
\end{example}}

\section{The cosyzygy of indecomposable projective modules}
In this section, we give a description of cosyzygies of indecomposable projective modules for string algebras.
Let $A=\kk\Q/\I$ be a string algebra and $M$ an $A$-module.
Recall that the injective dimension $\id M$ of $M$ is less than or equal to $n$ if
there exists an exact sequence
\[ 0 \longrightarrow M \mathop{\longrightarrow}\limits^{f_0} E_0
\mathop{\longrightarrow}\limits^{f_1} E_1
\mathop{\longrightarrow}\limits^{f_2} \cdots
\mathop{\longrightarrow}\limits^{f_{n-1}} E_n
\longrightarrow 0, \]
where $E_i$ $(0\le i\le n)$ is an injective module. The {\defines self-injective dimension} of the finite-dimensional algebra $A$ is the injective dimension $\id A$ of $A_A \in \modcat A$.

For any module $M$ in $\modcat A$, assume that $e^M_0: M \to \EE_0(M)$ is the injective envelope of $M$.
Then $e^M_{i+1}: \EE_i(M) \to \EE_{i+1}(M)$ is induced by the injective envelope
$$e^{\coker(e^M_{i})}_0: \coker(e^M_{i}) \to \EE_0(\coker(e^M_{i})),$$
for any $i\ge 0$, where $\coker(e^M_{i}) = \EE_i(M)/\mathrm{im}(e^M_i)$.
We call $\coker(e^M_{i})$ the {\defines $(i+1)$th-cosyzygy} of $M$ and denote it by $\mho_{i+1}(M)$.

\subsection{The cosyzygies of indecomposable projective modules}
We want to compute the self-injective dimension of string algebra.
To do this, we need to study the cosyzygy of indecomposable projective module.

\begin{definition}\rm
A vertex $v_0$ of string quiver is said to be a {\defines relational vertex} if there are two arrows
$a$ and $b$ with $\t(a)=\s(b)=v_0$ such that $ab\in \I$;
otherwise, we say $v_0$ is a {\defines non-relational vertex}.
A relational vertex is said to be a {\defines strictly relational vertex}
if it is the following forms:
\begin{itemize}
  \item[(1)] the Type $(2^{\inner}, 2^{\out})$ in \Pic \ref{fig:2in2out} and at least one of $a_{1}d_{1}$ and $b_{1}c_{1}$
    is a subpath of some generator of $\I$;
  \item[(2)] the Type $(1^{\inner}, 2^{\out})$ and all the paths $\alpha\beta$ with $\t(\alpha)=\s(\beta)=v_0$ are belong to $\I$ or a subpath of some generator of $\I$.
\end{itemize}
For a vertex $v_0$ which is one of the Types \checks{$(2^{\inner}, 2^{\out})$ and $(1^{\inner}, 2^{\out})$}, if it is not strictly relational, then we say it is a {\defines gently relational vertex}.
\end{definition}

\begin{proposition} \label{prop:0-cosyzygy-I}
Let $A=\kk\Q/\I$ be a string algebra and $v_0$ be a vertex in $\Q$ of the Types \checks{$(2^{\inner}, 2^{\out})$ or $(1^{\inner}, 2^{\out})$}.
Then
\begin{align}\label{formula:0-cosyzygy-I}
  \mho_1(P(v_0))\cong D_{\Left}\oplus D\oplus D_{\Right}
\end{align}
where $D_{\Left}$ and $D_{\Right}$ are indecomposable modules which \checks{correspond} to directed strings and $D$ is an indecomposable module whose socle is $S(v_0)$. Furthermore, $D$ is injective if and only if $v_0$ is a gently relational vertex.
\end{proposition}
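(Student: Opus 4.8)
The plan is to read off the string of $P(v_0)$, take its injective envelope by inspecting the socle, and then identify the cokernel $\mho_1(P(v_0))$ combinatorially as three string summands.

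Since $v_0$ has exactly two outgoing arrows $a_{0,\ld}$ and $a_{0,\rd}$ in both Types $(2^{\inner},2^{\out})$ and $(1^{\inner},2^{\out})$, the projective $P(v_0)$ is the string module $\M(p^{-1}q)$, where $p$ (resp. $q$) is the maximal path in $(\Q,\I)$ starting with $a_{0,\ld}$ (resp. $a_{0,\rd}$); write $w_{\ld}=\t(p)$ and $w_{\rd}=\t(q)$. This is a single ``peak'' at $v_0$, so its socle is $S(w_{\ld})\oplus S(w_{\rd})$ and hence $\EE_0(P(v_0))=E(w_{\ld})\oplus E(w_{\rd})$ (with the obvious modification to $E(w_{\ld})^{2}$ if $w_{\ld}=w_{\rd}$). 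Each $E(w)$ is itself a string module whose string is a ``valley'' $\hat{p}\,(p^{\sharp})^{-1}$ built from the at most two maximal paths ending at $w$; in particular the arm $\hat p$ of $E(w_{\ld})$ is the maximal backward extension of $p$, and $p$ sits inside it as the suffix below $v_0$, so $\hat p=r\,p$ with $r\colon \s(\hat p)\to v_0$. I would record this, together with the analogous data $\hat q=r'q$ and the second arms $p^{\sharp},q^{\sharp}$, from the Butler--Ringel classification.

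Next I compute $\mho_1(P(v_0))=\coker\big(P(v_0)\hookrightarrow E(w_{\ld})\oplus E(w_{\rd})\big)$ by tracking basis vectors. The two legs of $P(v_0)$ below $v_0$, namely $p\subseteq E(w_{\ld})$ and $q\subseteq E(w_{\rd})$, lie in the image and are killed, while the apex identifies the two copies of the vector at $v_0$. What survives splits into three mutually disconnected strings: the initial subpaths $r$ of $\hat p$ and $r'$ of $\hat q$ (the parts above $v_0$), glued at $v_0$ to form $D:=\M\big(r\,(r')^{-1}\big)$, a string whose unique valley is $v_0$ and hence with $\soc D=S(v_0)$; and the second arms $p^{\sharp},q^{\sharp}$ with their bottom vertex $w_{\ld}$ (resp. $w_{\rd}$) deleted, which are directed strings $D_{\Left}$ and $D_{\Right}$. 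Any of these degenerates to $0$ when the corresponding arm is absent, and $D$ may reduce to a path or to $S(v_0)$ when $r$ or $r'$ is trivial. That these pieces are genuine direct summands (not merely a filtration, and that no band module appears) reduces to the observation that, once $w_{\ld},w_{\rd}$ and the legs are removed, no arrow of $E(w_{\ld})\oplus E(w_{\rd})$ connects them; this yields the decomposition \eqref{formula:0-cosyzygy-I}.

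Finally, for the injectivity criterion I compare $D=\M\big(r\,(r')^{-1}\big)$ with $E(v_0)$. The injective $E(v_0)$ is the valley at $v_0$ whose arms are the maximal paths into $v_0$ ending with the incoming arrows of $v_0$, so $D\cong E(v_0)$ precisely when $r$ and $r'$ realize these full arms. The decisive point is whether the backward extension of $p$ (resp. $q$) past $v_0$ is blocked: writing $c\,a_{0,\ld}$ (resp. $c'\,a_{0,\rd}$) for the relevant length-two path through $v_0$, the extension reaches the full arm exactly when this path is not a subpath of any generator of $\I$, and is truncated below the full arm as soon as it is such a subpath, forcing $D\ncong E(v_0)$. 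Translating ``neither length-two composition through $v_0$ is a subpath of a generator'' into the language of the preceding definition is precisely the statement that $v_0$ is gently relational, which gives ``$D$ injective $\iff v_0$ gently relational.'' I expect this last step to be the main obstacle: one must treat exhaustively the case where the length-two path equals a generator, the case where it is a proper prefix of a longer generator (blocking the prepending of $c$ itself, so $r$ is trivial), and the case where it is a proper suffix (allowing $c$ but blocking further backward extension, so $r$ stops short of the arm), showing in each that $r$ falls below the full arm of $E(v_0)$, while the complementary case reproduces $E(v_0)$ exactly; this, together with the bookkeeping for the degenerate configurations ($w_{\ld}=w_{\rd}$, a missing arm at $w_{\ld}$ or $w_{\rd}$, or only one incoming arrow in Type $(1^{\inner},2^{\out})$), is where the care lies.
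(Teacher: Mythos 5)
Your proposal is correct and follows essentially the same route as the paper: read off the string $p^{-1}q$ of $P(v_0)$, take $\EE_0(P(v_0))=E(w_{\ld})\oplus E(w_{\rd})$ from the socle, identify the cokernel as the two truncated second arms $D_{\Left},D_{\Right}$ plus the valley $D$ at $v_0$ glued from the backward extensions $r,r'$, and decide injectivity of $D$ by whether the length-two compositions through $v_0$ used for those extensions sit inside a generator of $\I$ (which is exactly the strictly/gently relational dichotomy). The case analysis you flag at the end (composition equal to a generator, proper prefix, proper suffix, degenerate arms) is precisely the content of the paper's argument with $b_{\ell_{\ru}+1}\cdots b_1c_1\cdots c_t$.
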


\begin{proof}
\checks{We only prove for the Type $(2^{\inner}, 2^{\out})$, the other Type is similar.}
Assume \checks{that} $v_0$ is a vertex of the Type $(2^{\inner}, 2^{\out})$,
\checks{and the indecomposable projective module $P(v_0)$ is non-injective.
Then the string corresponding to} $P(v_0)$ is
\begin{center}
$c_{\ell_{\ld}}^{-1}\cdots c_2^{-1} c_1^{-1}d_1d_2\cdots d_{\ell_{\rd}}$
\end{center}
for \checks{some} $\ell_{\ld}, \ell_{\rd} \ge 1$. Thus, the injective envelope of $P(v_0)$ is
\noindent
\begin{center}
$e_0^{P(v_0)}: P(v_0) \to \EE(P(v_0)) \cong
E(v_{\ld}^{(\ell_{\ld})}) \oplus E(v_{\rd}^{(\ell_{\rd})})$,
\end{center}
where $E(v_{\ld}^{(\ell_{\ld})})$ and $E(v_{\rd}^{(\ell_{\rd})})$
are string modules respectively corresponding to strings
\begin{center}
$\wp_{\Left} \cdot c_{\ell_{\ld}}^{-1}\cdots c_2^{-1} c_1^{-1}b_1^{-1}b_2^{-1}\cdots b_{\ell_{\ru}}^{-1}$
and
$\wp_{\Right}\cdot d_{\ell_{\rd}}^{-1}\cdots d_2^{-1} d_1^{-1}a_1^{-1}a_2^{-1}\cdots a_{\ell_{\lu}}^{-1}$
\end{center}
for two paths $\wp_{\Left}$ and $\wp_{\Right}$ in $(\Q, \I)$ with $\t(\wp_{\Left}) = \t(c_{\ell_{\ld}})$ and $\t(\wp_{\Right}) = \t(d_{\ell_{\rd}})$ (see \Pic \ref{fig:2in2out}).
Then $$\mho_0(P(v_0)) \cong D_{\Left} \oplus D \oplus D_{\Right}, $$
where:
$\M^{-1}(D_{\Left}) = \widetilde{\wp}_{\Left}$,
$\M^{-1}(D) = a_{\ell_{\lu}} \cdots a_2a_1 b_1^{-1}b_2^{-1}\cdots b_{\ell_{\ru}}^{-1}$
($\ell_{\lu},\ell_{\ru} \ge 0$)
and $\M^{-1}(D_{\Right}) = \widetilde{\wp}_{\Right}$.
\checks{Notice that if $\ell_{\lu}=0$, then $\M^{-1}(D) = b_1^{-1}b_2^{-1}\cdots b_{\ell_{\ru}}^{-1}$;
if $\ell_{\ru}=0$, then $\M^{-1}(D) = a_{\ell_{\lu}} \cdots a_2a_1$;
and if $\ell_{\lu}=0$ and $\ell_{\ru}=0$, then $\M^{-1}(D)$ is the string of length zero corresponding to $v_0$.
}

\begin{figure}[htbp]
\centering
\includegraphics[width=17cm]{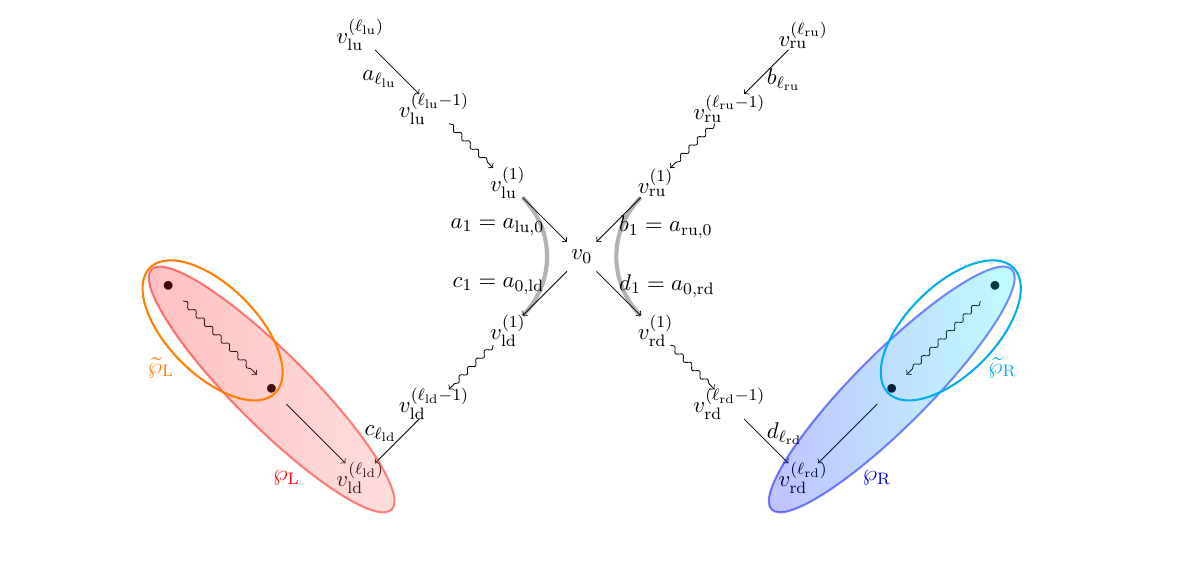}\\
\caption{The vertex $v_0$ of Type $(2^{\inner}, 2^{\out})$,
where $a_1c_1\in\I$ and $b_1d_1\in\I$}
\label{fig:2in2out}
\end{figure}

If $v_0$ is a strictly relational vertex, without loss of generality,
we assume that $b_1c_1$ is a subpath of some relation in $\I$.
Then there exists an integer $1\le t \le \ell_{\ld}+1$ such that
$b_{\ell_{\ru}+1}b_{\ell_{\ru}}\cdots b_2b_1 c_1 c_2 \cdots c_t$ is a generator of $\I$,
where $b_{\ell_{ru}+1}$ is an arrow ending at $v_{\ru}^{(\ell_{\ru})}$. Thus, $b_{\ell_{\ru}+1}b_{\ell_{\ru}}\cdots b_2b_1 \notin \I$,
this shows that $D$ is not injective. Now, if $v_0$ is a gently relational vertex, then we have
    \begin{itemize}
      \item[(1)] $v_{\ru}^{(\ell_{\ru})}$ is a source of $(\Q, \I)$ or there exists an integer $1\le x \le \ell_{\ru}+1$ such that
      $b_{\ell_{\ru}+1}b_{(\ell_{\ru})}\cdots b_x$ is a generator of $\I$;
      \item[(2)] $v_{\lu}^{(\ell_{\lu})}$ is a source of $(\Q, \I)$ or there exists an integer $1\le y \le \ell_{\lu}+1$ such that
        $a_{\ell_{\lu}+1}a_{\ell_{\lu}}\cdots a_y$ is a generator of $\I$,
        where $a_{\ell_{\lu}+1}$ is an arrow ending at $v_{\lu}^{(\ell_{\lu})}$.
    \end{itemize}
Thus,  $\M^{-1}(D)=a_{\ell_{\lu}} \cdots a_2a_1 b_1^{-1}b_2^{-1}\cdots b_{\ell_{\ru}}^{-1}$ is an injective string,
then $M$ is an injective module.
\end{proof}
\checks{The formula (\ref{formula:0-cosyzygy-I}) also holds for $v_0$ being a vertex of the Type $(0^{\inner},2^{\out})$.
In this case, $D$ is both simple and injective.}
\checks{If $v_0$ is a vertex of the Type $(2^{\inner}, 1^{\out})$, then $P(v_0)$ is a string module corresponding to a directed string.
The following lemma provides a description of the $1$st-cosyzygy of any indecomposable module corresponding to a directed string.}

\begin{lemma} \label{lem:direct str. mod.}
Let $A=\kk\Q/\I$ be a string algebra and $M$ be an indecomposable $A$-module corresponding to a directed string $s$.
Then $\mho_1(M) \cong D_1 \oplus D_2$, where $D_1$ and $D_2$ are string modules corresponding to directed strings.
\end{lemma}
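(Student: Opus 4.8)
The plan is to reduce immediately to the case where the directed string $s$ is a genuine path, since if $s$ is the trivial string then $M=0$, whence $\mho_1(M)=0$, which is $D_1\oplus D_2$ with both summands zero (hence vacuously string modules of directed strings). So assume $s=b_1b_2\cdots b_t$ is a path with $t\ge 0$ (allowing $t=0$, i.e. $M\cong S(w)$ simple). First I would record the two structural facts that drive everything: as a right module $M=\M(b_1\cdots b_t)$ is uniserial, with $\top M\cong S(\s(b_1))$ and \emph{simple} socle $\soc M\cong S(w)$, where $w:=\t(b_t)$. Consequently the injective envelope of $M$ is the indecomposable injective $E(w)$, and computing $\mho_1(M)$ reduces to computing the cokernel $E(w)/M$ of this single essential monomorphism.

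Next I would describe $E(w)$ explicitly as a string module. Dually to the projective strings recalled before Example \ref{exp:str alg}, the injective $E(w)$ corresponds to an injective string of the shape $g\,h^{-1}$, where $g=g_1\cdots g_m$ and $h=h_1\cdots h_k$ are the two maximal direct paths in $(\Q,\I)$ ending at $w$ (the arrows $g_m,h_k$ being the at most two arrows of $\Q$ entering $w$); the walk has its unique valley at $w$, so $\soc E(w)\cong S(w)$. The key local step is to locate the image of $M$ inside $E(w)$: since $b_t$ is an arrow into $w$ it coincides with $g_m$ or with $h_k$, say $b_t=g_m$. String-quiver axiom $(3)$ forces a direct path ending with a prescribed arrow and avoiding $\I$ to be uniquely determined, so $b_1\cdots b_t$ is a suffix of the maximal arm $g$, namely $b_i=g_{m-t+i}$ with $t\le m$. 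Matching socles, the injective envelope then identifies $M$ with the bottom segment of the $g$-arm, i.e. the submodule of $E(w)$ supported on the walk-vertices $\s(g_{m-t+1}),\dots,\s(g_m),w$.

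Then I would compute the cokernel by the standard valley-removal analysis for string modules. Passing to $E(w)/M$ deletes the valley $w$ together with the bottom $t$ vertices of the $g$-arm; since in the line-shaped walk of $E(w)$ the two arms communicate only through $w$, the remaining walk disconnects into the top part of the $g$-arm and the whole $h$-arm above $w$, and the quotient splits accordingly. Hence
\[
\mho_1(M)\;\cong\;E(w)/M\;\cong\;\M(g_1\cdots g_{m-t-1})\oplus \M(h_1\cdots h_{k-1})\;=\;D_1\oplus D_2,
\]
where both summands are string modules of direct paths, i.e. directed string modules. I would check that the degenerate cases are absorbed by this formula under the convention that an empty arm denotes the zero module: if $w$ has a single entering arrow then $h$ is trivial and $D_2=0$; if $t=m$ (so $M=\M(g)$ is the whole arm) then $D_1=0$; and when both occur $M=E(w)$ is already injective and $\mho_1(M)=0$.

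The main obstacle is the bookkeeping in the middle step rather than any deep idea: one must use the string-quiver relations to pin down the maximal arms $g,h$ and to justify that $M$ embeds as a bottom segment of a \emph{single} arm (not straddling the valley), and then argue cleanly that quotienting a string module by a socle-containing bottom segment yields the direct sum of the two complementary arm-modules. This is precisely the mechanism already used for the projective case in Proposition \ref{prop:0-cosyzygy-I} (where the Type $(2^{\inner},1^{\out})$ projective is the special instance $M=P(v_0)$), so I would model the formal argument on that proof, taking care that here the arm-lengths $m,k$ and the truncation index $t$ are genuinely arbitrary.
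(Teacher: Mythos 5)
Your proof is correct and follows essentially the same route as the paper's: both identify $\EE(M)$ with the indecomposable injective $E(\t(s))$, realized as the string module on the two maximal direct paths ending at $\t(s)$ (the paper writes this string as $\wp'\, s\, \wp''^{-1}$, which is your $g\,h^{-1}$ with $g=\wp' s$), and then read off the cokernel as the direct sum of the two leftover top segments of the arms. The extra bookkeeping you supply --- uniqueness of the backward extension via string-quiver axiom (3), the check that $M$ sits as a socle-containing bottom segment of a single arm so that the quotient splits, and the degenerate cases --- is exactly what the paper leaves implicit.
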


\begin{proof}
Since $s = a_1\cdots a_{\ell}$ is direct. Thus,
\begin{center}
$\M^{-1}(\EE(M)) = \wp' a_1\cdots a_{\ell} \wp''^{-1}$,
\end{center}
where $\wp' = a_1'\cdots a_m'$ ($m\ge 0$) is a path ending at $\s(a_1)$
and $\wp'' = a_1''\cdots a_n''$ ($n\ge 0$) is a path ending at $\t(a_{\ell})$. It should be noted that if $m=0$, then $\wp'$ is a path of length zero.
Then we obtain that $$\mho_1(M) = \M(a_1'\cdots a_{m-1}') \oplus \M(a_1''\cdots a_{n-1}'')$$
for all $m\ge 1 \text{ and } n\ge 1$. In particular, if $m = 0$ and $n = 0$, then $\mho_1(M)=0$.
Therefore, $\mho_1(M)$ is always a direct sum of two string modules corresponding to directed strings.
\end{proof}

\begin{remark}\rm \label{rmk:direct str. mod.}
If $M$ is simple, then the string corresponding to $M$ is also a directed string with length zero.
Thus the dual of Lemma \ref{lem:direct str. mod.} can be used to compute the syzygies of any simple module.
\end{remark}

\subsection{Effective intersections}
In this subsection, we introduce the effective intersection of relations. In order to give the definition of effective intersections of relations, we introduce maximal paths with respect to a given path.
Let $p$ be a path in \checks{quiver $\Q$ which is} shown in \Pic \ref{fig:hat p}.
\begin{itemize}
  \item[(1)] If the quiver $\Q$ has no oriented cycle, then $p$ can be viewed as a subpath of the path $p_{\u}p$ where $p_{\u}$ is \checks{a} path \checks{in $\Q$} with no arrows ending at $v_1$;
\end{itemize}
\begin{figure}[htbp]
\centering
\begin{tikzpicture}[scale=1.75]
\draw (0,0) node{$v_1$};
\draw[->] (0.2,0) -- (0.8,0); \draw (0.5,0) node[above]{$a_1$};
\draw (1,0) node{$\cdots$};
\draw[->] (1.2,0) -- (1.8,0); \draw (1.5,0) node[above]{$a_{l-1}$};
\draw (2,0) node{$v_l$};
\draw[->] (2.2,0) -- (2.8,0); \draw (2.5,0) node[above]{$a_{l}$};
\draw (3,0) node{$v_{l+1}$};
\draw[->] (3.2,0) -- (3.8,0); \draw (3.5,0) node[above]{$a_{l+1}$};
\draw (4,0) node{$\cdots$};
\draw[->] (4.2,0) -- (4.8,0); \draw (4.5,0) node[above]{$a_{l+t-1}$};
\draw (5,0) node{$v_{l+t}$};
\draw[->] (5.2,0) -- (5.8,0); \draw (5.5,0) node[above]{$a_{l+t}$};
\draw (6.05,0) node{$v_{l\!+\!t\!+\!1}$};
\draw[red!50][line width =1pt] (4.5,0) ellipse (1.7cm and 0.3cm);
\draw[red] (4.5,-0.3) node[below]{$p=a_{l+1}\cdots a_{l+t}$};
\draw[blue!50][line width =1pt] (1.5,0) ellipse (1.7cm and 0.3cm);
\draw[blue] (1.5,-0.3) node[below]{$p_{\u}=a_1\cdots a_l$};
%
\draw[opacity=0] (0, 0.5) node{$x$};
\end{tikzpicture}
\caption{A maximal path with respect to $p$}
\label{fig:hat p}
\end{figure}
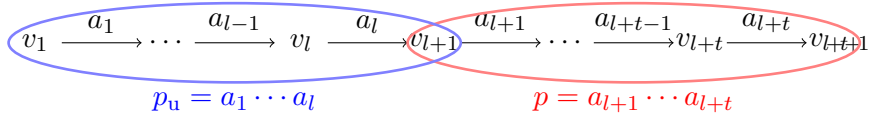
\begin{itemize}
  \item[(2)] If the quiver $\Q$ has oriented cycles, then $p$ may be a subpath of the path $p_{\u}p$ with infinite length \checks{that takes the following form}
\end{itemize}
\begin{center}
\begin{tikzpicture}[scale=1.75]
\draw[opacity=0] (0, 0.2) node{$x$};
\draw (0,0) node{$\cdots$};
\draw[->][decorate, decoration
      ={snake,amplitude=.4mm,segment length=2mm,post length=1mm}]
      (0.2,0) -- (0.8,0);
\draw (0.5,0) node[above]{$q_n$};
\draw (1,0) node{$v$};
\draw[->][decorate, decoration
      ={snake,amplitude=.4mm,segment length=2mm,post length=1mm}]
      (1.2,0) -- (1.8,0);
\draw (1.5,0) node[above]{$q_{n-1}$};
\draw (2,0) node{$\cdots$};
\draw[->][decorate, decoration
      ={snake,amplitude=.4mm,segment length=2mm,post length=1mm}]
      (2.2,0) -- (2.8,0);
\draw (2.5,0) node[above]{$q_2$};
\draw (3,0) node{$v$};
\draw[->][decorate, decoration
      ={snake,amplitude=.4mm,segment length=2mm,post length=1mm}]
      (3.2,0) -- (3.8,0);
\draw (3.5,0) node[above]{$q_1$};
\draw (4,0) node{$v$};
\draw[->][decorate, decoration
      ={snake,amplitude=.4mm,segment length=2mm,post length=1mm}]
      (4.2,0) -- (4.8,0);
\draw (5.1,0) node{$\t(p)$.};
\end{tikzpicture}
\end{center}
\checks{It should be noted that $q_i$ and $q_j$ may not be equal for any $1\leq i,j$ and $i\neq j$.}

In both cases, we call $p_{\u}p$ a {\defines left maximal path} with respect to $p$. According to the definition of string algebras, there may be two arrows ending at a vertex $v_{k}$ for any $l+1\leq k\leq l+t+1$.
Thus, \checks{left maximal path} with respect to $p$ is not unique. In particular, if $p$ is a path of length zero, we call $p_{\u}p$ a left maximal path with respect to $\s(p)=\t(p)$. For simplicity, we abbreviate \checks{$p_{\u}p$ as $\widehat{p}$} and denote by $\lmpset(p)$ the set of all left maximal paths with respect to $p$. Dually, we can define {\defines right maximal paths} with respect to $p$.

\begin{remark} \rm
\checks{Notice that in (2) the vertex $v$ may not equal to $\s(p)$.}
\checks{Consider the string algebra $A=\kk\Q/\I$ given in Example \ref{exp:str alg}.}
\checks{A left maximal path respect to $p=z$ is}
\begin{center}
  \checks{$\cdots 1 \To{a} 2 \overbrace{\To{b} 3 \To{c} 4 \To{d}  5 \To{e} 6 \To{f} 7 \To{g} 8 \To{h} 1 \To{a}}^{q_1}
    2 \To{x} 9 \overbrace{\To{z}}^{p} 10$}
\end{center}
\checks{whose length is infinity. Here, $v=2\ne\s(p)=9$. }
\checks{Moreover, the above path is also a left maximal path respect to $p'=xz$.
In this case, we have $v=2=\s(p')$.}
\end{remark}

\begin{definition} \rm
Let $A=\kk\Q/\I$ be a string algebra and $p_2$, $p_1$ be two subpaths of some left maximal path $\widehat{p}$ with respect to a path $p$ in $\Q$.
If
\begin{itemize}
  \item[(1)] the vertex $\s(p_1)$ is between $\s(p_2)$ and $\t(p_2)$,
    \checks{where $\s(p_1)\notin\{\s(p_2),\t(p_2)\}$};
  \item[(2)] the vertex $\t(p_2)$ is between $\s(p_1)$ and $\t(p_1)$,
    \checks{where $\s(p_2)\notin\{\s(p_1),\t(p_1)\}$},
\end{itemize}
then we say $(p_2,p_1)$ is a pair with intersecting relations and call $p_2$ {\defines left-intersects} with $p_1$.
\end{definition}

A sequence of paths $\{p_i\}_{i=1}^{m} = (p_m, \ldots, p_1)$ is called a {\defines  left-intersecting sequence} beginning \checks{at} $p_m$ if $(p_{i+1},p_i)$ is a pair with \checks{left-intersecting} relations for any $1\le i\le m-1$. For convenience, we denote by LIS the left-intersecting sequence.

\begin{remark} \rm
Assume that $(p_m, \ldots, p_1)$ is a left-intersecting sequence, then we know that $(p_t, \ldots, p_1)$ is also a left-intersecting sequence for any $2\leq t\leq m$.
\end{remark}

\checks{Next, we will introduce effective left-intersecting sequences, which will be used to compute the self-injective dimension of a string algebra. First of all, we need the following definition.}

\begin{definition} \label{def:dis} \rm
Let $\pnotation = \cdots a_{-1}a_0 a_1\cdots$ be a left maximal path with respect to a path $p$ in $\Q$ and  $\wp = a_0a_1\cdots a_l$ be a subpath of $\pnotation$. \checks{Let $v_i := \s(a_i)$}. For any integer $t$,
\begin{itemize}
  \item [(1)] if $t \ge 0$, we define the {\defines left-distance} on the path ${\pnotation}$ from $v_{-t}$ to $\wp$ as $d_{\pnotation}(v_{-t}, \wp) = t$.
  \item [(2)] if $t\geq l+1$, then we define the {\defines right-distance} on the path ${\pnotation}$ from $\wp$ to $v_t$ as $d_{\pnotation}(\wp, v_t) = t-(l+1)$.
\end{itemize}

\end{definition}

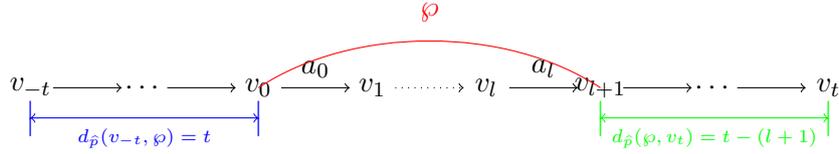
\begin{figure}[htbp]
\centering
\begin{tikzpicture}[xscale=1.5]
\draw (0,0) node{$v_0$} (1,0) node{$v_1$} (2,0) node{$v_l$} (3,0) node{$v_{l+1}$};
\draw[->] (0.2,0) -- (0.8,0); \draw (0.5,0) node[above]{$a_0$};
\draw[->][shift={(1,0)}] (0.2,0) -- (0.8,0) [dotted];
\draw[->][shift={(2,0)}] (0.2,0) -- (0.8,0); \draw[shift={(2,0)}] (0.5,0) node[above]{$a_l$};
\draw[red] (0,0) to[out=45,in=135] (3,0);
\draw[red] (1.5,1) node{$\wp$};
\draw (-2,0) node{$v_{-t}$} (-1,0) node{$\cdots$};
\draw[->][shift={(-2,0)}] (0.2,0) -- (0.8,0);
\draw[->][shift={(-1,0)}] (0.2,0) -- (0.8,0);
\draw[<->][blue] (-2,-0.4)node{$\big|$} -- (0,-0.4) node{$\big|$};
\draw[blue] (-1,-0.4) node[below]{\tiny $d_{\widehat{p}}(v_{-t},\wp)=t$};
\draw (5,0) node{$v_{t}$} (4,0) node{$\cdots$};
\draw[->][shift={(3,0)}] (0.2,0) -- (0.8,0);
\draw[->][shift={(4,0)}] (0.2,0) -- (0.8,0);
\draw[<->][green] (3,-0.4)node{$\big|$} -- (5,-0.4) node{$\big|$};
\draw[green] (4,-0.4) node[below]{\tiny $d_{\widehat{p}}(\wp,v_{t})=t-(l+1)$};
\end{tikzpicture}
\caption{Left and right-distances}
\label{fig:distance}
\end{figure}

Let $A=\kk\Q/\I$ be a string algebra and $s$ be a directed string. We denote by
\begin{align}
& \frakR_{\pnotation}^{\u}(s)
= \{r \in \I \text{ is a relation on } \pnotation \mid d_{\pnotation} (r, \t(s)) \ge 0 \}; \nonumber \\
& \frakR_{\pnotation}^{\d}(s)
= \{r \in \I \text{ is a relation on } \pnotation \mid d_{\pnotation} (\s(s), r) \ge 0 \}. \nonumber
\end{align}
In particular, if $\length(s)=0$, then we assume that $v = \s(s) = \t(s)$ and denote by
\begin{center}
$\frakR_{\pnotation}^{\u}(v) = \frakR_{\pnotation}^{\u}(s)$
and $\frakR_{\pnotation}^{\d}(v) = \frakR_{\pnotation}^{\d}(s)$.
\end{center}
Now we define the partial ordering relation $\prec$ as:
\begin{center}
$r_1 \prec r_2 \Leftrightarrow d_{\pnotation} (r_1, \t(s)) < d_{\pnotation} (r_2, \t(s)) (\mathrm{resp.}~d_{\pnotation} (\s(s), r_1) < d_{\pnotation} (\s(s), r_2)$).
\end{center}
It is clear that $(\frakR_{\pnotation}^{\u}(s), \prec)$ and  $(\frakR_{\pnotation}^{\d}(s), \prec)$ are posets. If $(\frakR_{\pnotation}^{\u}(s), \prec)$ is not empty, then it has \checks{a} unique minimal element. In this case, we denote by $r_{\pnotation}^{\u}(s)$ and $r_{\pnotation}^{\d}(s)$ the minimal elements of $(\frakR_{\pnotation}^{\u}(s), \prec)$ and  $(\frakR_{\pnotation}^{\d}(s), \prec)$, respectively.

\checks{Let $\nota$ be a left maximal path with respect to some path $p$ and $r_1 = r_{\nota}^{\u}(v) = a_{t_2}\cdots a_2a_1 $ ($0<t_1<t_2$) be a relation ending at $w_0$ (see \Pic \ref{fig:ELIS on rho}).
\begin{figure}[htbp]
\centering
\includegraphics[width=15cm]{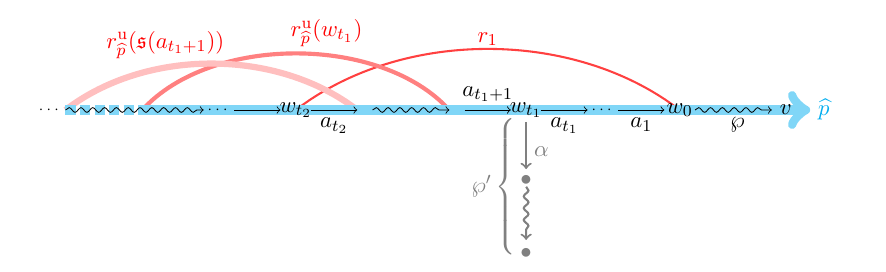} \\
\caption{If $\alpha$ exists, that is, $\length(\wp')>0$,
then $r_2=r_{\nota}^{\u}(w_{t_1})$ effectively left-intersects with $r_1$;
otherwise, $r_2=r_{\nota}^{\u}(\s(a_{t_1+1}))$
(it is possible for $r_{\nota}^{\u}(w_{t_1})=r_{\nota}^{\u}(\s(a_{t_1+1}))$)}
\label{fig:ELIS on rho}
\end{figure}
If there is a relation $r$ which left-intersecting with $r_1$ such that one of the following conditions $(a)$ and $(b)$ holds, then we take $r_2=r$.
\begin{itemize}
  \item[(a)]
    If $\length(\wp')>0$, then $r = r_{\nota}^{\u}(w_{t_1})$.
  \item[(b)]
    If $\length(\wp')=0$, then $r = r_{\nota}^{\u}(\s(a_{t_1+1}))$.
\end{itemize}
Furthermore,
\begin{itemize}
  \item[(c)] let $r'$ be a relation which left-intersecting with $r_2$ such that $r' = r_{\nota}^{\u}(\s(r_{1}))$. Denote $r$ by $r_3$, then we obtain a sequence $(r_3, r_2, r_1)$.
  \item[(d)] For any $i\geq 3$, repeat the above process $(c)$, if we obtain the relation $r_i$, then we can obtain a relation $r_{i+1}$ which left-intersecting with $r_i$ such that $r_{i+1} = r_{\nota}^{\u}(\s(r_{i-1}))$. As a consequence, we obtain a sequence $(r_n, \ldots, r_2, r_1)$ of relations on $\nota$.
\end{itemize}}

\begin{definition} \label{def:ELIS} \rm 
\begin{itemize}
\item[(1)] We call $(r_{i+1},r_i)$ a {\defines pair with effective left-intersecting relations}
($=$ELIR for short) of a string $s = \wp'^{-1}a_{t_1}\cdots a_1\wp$ for any $1\leq i\leq n-1$.

\item[(2)] We call the sequence $(r_n, \ldots, r_2, r_1)$ of relations on $\nota$ an {\defines effective left-intersecting sequence} ($=$ELIS for short) {\defines of $s$}.
\end{itemize}
\end{definition}

Note that if $(r_n,\ldots,r_1)$ is an ELIS of $s$, then so is $(r_t,\ldots, r_1)$ for all $1\le t\le n$. Moreover, for any $1\leq i\leq n-1$, $(r_{i+1},r_i)$ is also an ELIR of the string $s'$ corresponding to the module $\mho_{i+1}(\M(s))$.

\begin{example} \label{exp:ELIS} \rm
\checks{
Let $A=\kk\Q/\I$ be the string algebra given in Example \ref{exp:str alg}.
Consider the projective string $z^{-1}x^{-1}bcd$ whose source is $2$.
For the path $\widehat{bcd} = \cdots (bcdefgha) (bcdefgha) bcd$, we have
\[ r_1 = r_{\widehat{bcd}}^{\u}(5) = abcd. \]
Since $2$ is a vertex of the Type $(1^{\inner},2^{\out})$,
by process $(a)$, we have
\[r_2 = r_{\widehat{bcd}}^{\u}(2) = fgha. \]
Furthermore, by the process $(c)$, we can obtain
\[r_3 = r_{\widehat{bcd}}^{\u}(\s(r_1)) =  r_{\widehat{bcd}}^{\u}(1) = efgh. \]
Repeating the process $(d)$, we obtain an ELIS $(\ldots, r_5,r_4,r_3, r_2, r_1)$ of $P(2)$ on $\widehat{bcd}$ which is shown in \Pic \ref{fig:exp1}.}
\begin{figure}[htbp]
\centering
\includegraphics[width=16cm]{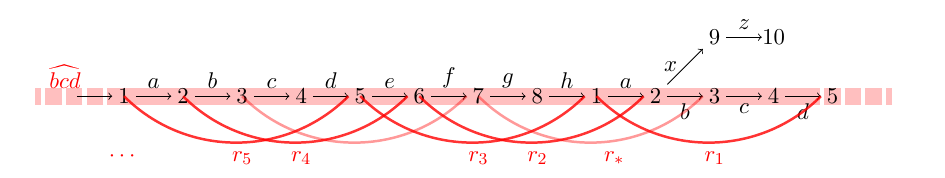}\\
\caption{The ELIS of $P(2)$ on $\widehat{bcd}$}
\label{fig:exp1}
\end{figure}
\checks{In this figure, one can see that $(r_*, r_1)$ is also a pair with left-intersecting relations,
but it is not effective. }
Similarly, see \Pic \ref{fig:exp2}, $(\ldots,r_5,r_4,r_3,r_2,r_1')$ is an ELIS of $P(2)$  on $\widehat{xz}$.
\begin{figure}[htbp]
\centering
\includegraphics[width=16cm]{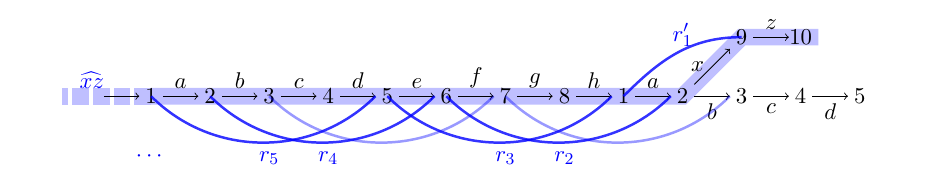}\\
\caption{The ELIS of $P(2)$ on $\widehat{xz}$}
\label{fig:exp2}
\end{figure}
\end{example}

Keep the notations from Proposition \ref{prop:0-cosyzygy-I} and denote by $s_a = a_{\ell_{\lu}}\cdots a_1$, $s_b = b_{\ell_{\ru}}\cdots b_1$,
$s_c = c_1 \cdots c_{\ell_{\ld}}$ and $s_d = d_1 \cdots d_{\ell_{\rd}}$.

\begin{itemize}
  \item[\checks{(1)}] If the minimal element of $\frakR_{\widehat{s_as_d}}^{\u}(v_{\rd}^{(\ell_{\rd})})$ is a relation with subpath $a_1d_1$, then we have
  $S(v_{\lu}^{(\ell_{\lu})})
    \le_{\oplus} \top D \le_{\oplus} \top(\EE(P(v_0))) = \top(\mho_1(P(v_0)))$.

  \item[\checks{(2)}] If the minimal element of $\frakR_{\widehat{s_bs_c}}^{\u}(v_{\ld}^{(\ell_{\ld})})$ is a relation with subpath
$b_1c_1$, then we have
  $S(v_{\ru}^{(\ell_{\ru})})
    \le_{\oplus} \top D \le_{\oplus} \top(\EE(P(v_0))) = \top(\mho_1(P(v_0)))$.
\end{itemize}

\checks{Let $v_0$ be a vertex of the Type $(2^{\inner}, 2^{\out})$, and define}
\begin{center}
$ \frakR_{\Left} = \big\{r \in \bigcup\limits_{\widehat{p}\in\lmpset(s_as_d)} \frakR_{\widehat{p}}^{\u} (
    v_0
    ) \mid (r,r_{\widehat{s_as_d}}^{\u}(v_{\rd}^{(\ell_{\rd})})) \text{  is a pair with intersecting relations} \big\}$;
\end{center}
and
\begin{center}
$ \frakR_{\Right} = \big\{ r \in \bigcup\limits_{\widehat{p}\in\lmpset(s_bs_c)} \frakR_{\widehat{p}}^{\u} (
    v_0
    ) \mid  (r ,r_{\widehat{s_bs_c}}^{\u}(v_{\ld}^{(\ell_{\ld})})) \text{  is a pair with intersecting relations} \big\}$.
\end{center}
Then we have the following proposition.

\begin{proposition} \label{prop:distance-proj case}
$\mho_1(D)$ is a direct sum of at most two string modules corresponding to directed strings. Moreover, $\mho_1(D)$ is injective if and only if
\begin{align}
\frakR_{\Left}
= \varnothing
\ \text{and}\ \
\frakR_{\Right}
= \varnothing. \label{formula:distance-proj case}
\end{align}
\end{proposition}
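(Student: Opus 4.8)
The plan is to compute $\mho_1(D)$ by deleting $D$ from its injective envelope, in exactly the spirit of the proof of Lemma~\ref{lem:direct str. mod.}, and then to decide the injectivity of the two resulting arms through the relation bookkeeping introduced before Definition~\ref{def:ELIS}.

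First I would pin down the injective envelope. By Proposition~\ref{prop:0-cosyzygy-I} the socle of $D$ is the simple $S(v_0)$, so $\EE(D)=E(v_0)$ is indecomposable, and it is the string module of the injective (valley) string at $v_0$: a maximal direct path $\tilde a_k\cdots\tilde a_1$ ending at $v_0$ through $a_1=a_{\lu,0}$, glued at $v_0$ to the formal inverse of a maximal direct path $\tilde b_m\cdots\tilde b_1$ ending at $v_0$ through $b_1=a_{\ru,0}$. Since $\M^{-1}(D)=a_{\ell_\lu}\cdots a_1b_1^{-1}\cdots b_{\ell_\ru}^{-1}$ with $\tilde a_i=a_i$ for $i\le\ell_\lu$ and $\tilde b_j=b_j$ for $j\le\ell_\ru$, the string of $D$ is precisely the bottom sub-valley of the string of $E(v_0)$. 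Deleting it, the same string-module computation as in Lemma~\ref{lem:direct str. mod.} gives
\[
\mho_1(D)=E(v_0)/D\cong D_1\oplus D_2,\qquad D_1=\M(\tilde a_k\cdots\tilde a_{\ell_\lu+1}),\quad D_2=\M(\tilde b_m\cdots\tilde b_{\ell_\ru+1}),
\]
each a string module of a directed string, and either summand is $0$ when the corresponding arm of $E(v_0)$ does not extend beyond that of $D$. This already yields the first assertion.

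Next, since a finite direct sum is injective iff each summand is, it suffices to decide when $D_1$ and $D_2$ are injective; by the left--right symmetry of the Type $(2^{\inner},2^{\out})$ vertex it is enough to treat $D_1$. Being a directed-string module, $D_1$ is injective iff its string is an injective string, equivalently iff $D_1$ coincides with the indecomposable injective $E(v_\lu^{(\ell_\lu)})$ at its socle vertex $v_\lu^{(\ell_\lu)}=\t(\tilde a_{\ell_\lu+1})$. The decisive point is why $D_1$ can be nonzero at all: the integer $\ell_\lu$ is cut out by the minimal relation truncating the arm of $E(v_\rd^{(\ell_\rd)})$ read along $s_as_d=a_{\ell_\lu}\cdots a_1d_1\cdots d_{\ell_\rd}$, a genuine path since $a_1d_1\notin\I$ (see Figure~\ref{fig:2in2out}); this truncating relation is exactly $r_{\widehat{s_as_d}}^{\u}(v_\rd^{(\ell_\rd)})$. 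When it runs through $v_0$ and down into the $d$-arm, it no longer truncates the purely direct $a$-arm of $E(v_0)$, so the latter reaches above $v_\lu^{(\ell_\lu)}$ and $D_1\ne 0$; and $D_1$ then fails to be injective precisely when a further relation $r$ lying above $v_0$ on a left maximal path of $s_as_d$ overlaps the governing relation, either prolonging the $\tilde a$-arm of $E(v_\lu^{(\ell_\lu)})$ past that of $D_1$ or opening a second valley arm at $v_\lu^{(\ell_\lu)}$.

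Finally I would match this overlap with the definition of $\frakR_\Left$: such an $r$ is exactly an element of $\bigcup_{\widehat p\in\lmpset(s_as_d)}\frakR_{\widehat p}^{\u}(v_0)$ that, together with $r_{\widehat{s_as_d}}^{\u}(v_\rd^{(\ell_\rd)})$, forms a pair with intersecting relations, i.e.\ a member of $\frakR_\Left$. Hence $D_1$ is injective iff $\frakR_\Left=\varnothing$, and symmetrically $D_2$ is injective iff $\frakR_\Right=\varnothing$ (using $s_bs_c$, $v_\ld^{(\ell_\ld)}$, and $b_1c_1\notin\I$). Combining the two cases gives that $\mho_1(D)=D_1\oplus D_2$ is injective iff (\ref{formula:distance-proj case}) holds. \textbf{The main obstacle} is this last matching step: one must verify, purely through the left-distance function $d_{\widehat p}$ and the minimal-relation operators $r_{\widehat p}^{\u}(-)$, that the algebraic event ``$E(v_\lu^{(\ell_\lu)})$ is strictly larger than $D_1$'' corresponds bijectively to the combinatorial intersecting-pair condition, and in particular that no relation is miscounted when $v_0$ admits several left maximal paths or when oriented cycles make the paths $\widehat p$ of infinite length.
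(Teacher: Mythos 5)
Your setup coincides with the paper's: $\soc D=S(v_0)$ forces $\EE(D)=E(v_0)$, and deleting $D$ from the valley string of $E(v_0)$ leaves two directed-string arms, one lying on a left maximal path of $s_as_d$ and one on a left maximal path of $s_bs_c$; this disposes of the first assertion, and the reduction to deciding injectivity of each arm separately is also how the paper proceeds. But there is an off-by-one error in your description of the arms: the vertex $v_{\lu}^{(\ell_{\lu})}=\t(\tilde a_{\ell_{\lu}+1})$ already carries a basis vector of $D$, so the left summand of $E(v_0)/D$ is $\M(\tilde a_k\cdots\tilde a_{\ell_{\lu}+2})$ with socle $S\bigl(\s(\tilde a_{\ell_{\lu}+1})\bigr)$ --- the vertex the paper calls $x$ --- and not $\M(\tilde a_k\cdots\tilde a_{\ell_{\lu}+1})$ with socle $S(v_{\lu}^{(\ell_{\lu})})$ (compare Lemma \ref{lem:direct str. mod.}, where the last arrow $a_m'$ of $\wp'$ is dropped in the cosyzygy). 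Your injectivity test is therefore anchored at the wrong vertex: it must be run against $E(x)$, not against $E(v_{\lu}^{(\ell_{\lu})})$.

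More seriously, the step you yourself label ``the main obstacle'' is precisely the content of the proposition, and you do not carry it out; the paper's proof consists of the two directions of exactly that equivalence. For $\frakR_{\Left}=\varnothing$ it shows (i) no path $a_{\ell_{\lu}+m}\cdots a_{\ell_{\lu}+2}a_{\ell_{\lu}+1}s_a$ contains a relation, since such a relation would lie in $\frakR^{\u}_{\widehat{s_as_d}}(v_{\lu}^{(\ell_{\lu})})\subseteq\frakR_{\Left}$, so the string $s'$ of the left summand is left maximal; and (ii) the vertex $x$ admits at most one incoming arrow, because a second arrow $\alpha$ ending at $x$ would force $\alpha a_{\ell_{\lu}+1}\in\I$ by the string-algebra axioms, again producing an element of $\frakR_{\Left}$ --- this is what rules out your ``second valley arm'' and yields $\M(s')\cong E(x)$. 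For $\frakR_{\Left}\ne\varnothing$ it identifies $r^{\u}_{\widehat{s_as_d}}(v_0)=a_{\ell_{\lu}+n}\cdots a_{\ell_{\lu}+1}a_{\ell_{\lu}}\cdots a_i$ with $n\ge 2$, so that $s'=a_{\ell_{\lu}+(n-1)}\cdots a_{\ell_{\lu}+2}$ and $a_{\ell_{\lu}+n}s'\notin\I$, i.e.\ $\M(s')$ is a proper submodule of $E(x)$ and hence not injective. Until you reproduce both arguments --- in particular the incoming-arrow count at $x$, which you only gesture at and place at the wrong vertex --- the proposal establishes only the first sentence of the proposition and restates, rather than proves, the equivalence \eqref{formula:distance-proj case}.
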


\begin{proof}
\checks{See \Pic \ref{fig:distance-proj case}, the cosyzygy $\mho_1(D)$ is a direct sum of two indecomposable modules $\M(s')$ and $\M(s'')$,
where $s'$ is a directed string on $\widehat{s_as_d}$,
and $s''$ is a directed string on $\widehat{s_bs_c}$}.

\begin{figure}[htbp]
\centering
\includegraphics[width=16cm]{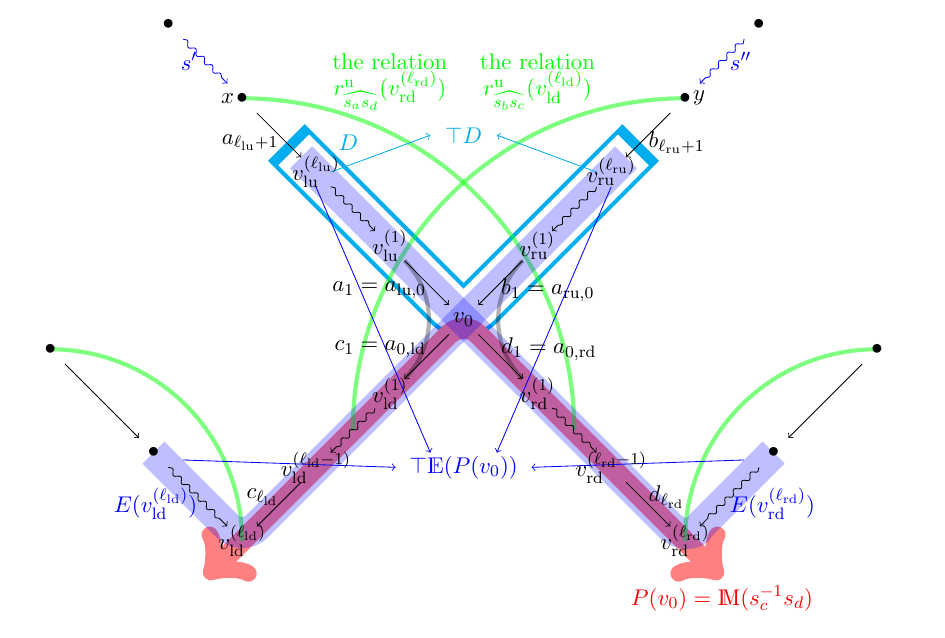}\\
\caption{$\EE(P(v_0)) \cong E(v_{\ld}^{(\ell_{\ld})}) \oplus E(v_{\rd}^{(\ell_{\rd})})$ }
\label{fig:distance-proj case}
\end{figure}

\checks{Let $\frakR_{\Left} = \varnothing$. Then we have the following two facts. }
\begin{itemize}
  \item[(1)] \checks{
    For each subpath $\wp$ of $\widehat{s_as_d}$ which is of the form $a_{\ell_{\lu}+m}\cdots a_{\ell_{\lu}+2}$ ($m\ge 2$),
    we have $\wp a_{\ell_{\lu}+1}s_a \notin \I$.
    Otherwise, $\wp a_{\ell_{\lu}+1}s_a$ contains a subpath which is a relation in $\frakR_{\widehat{s_as_d}}^{\u}(v_{\lu}^{(\ell_{\lu})})$ $(\subseteq \frakR_{\Left})$, this is a contradiction.
    }

  \item[(2)]
    The vertex \checks{$x$} is of the Type $((\le 1)^{\inner}, (\le 2)^{\out})$.
    Indeed, if there are two arrows $\alpha_1$ and $\alpha_2$ with $\t(\alpha_1) = \checks{x} = \t(\alpha_2)$,
    then, by the definition of string algebra, at least one of $\alpha_1 a_{\ell_{\lu+1}}$
    and $\alpha_2 a_{\ell_{\lu+1}}$ belong to $\I$. For any $j\in \{1,2\}$, if $\alpha_j a_{\ell_{\lu+1}} \in \I$, then $\alpha_j a_{\ell_{\lu+1}} \in \frakR_{\widehat{s_as_d}}^{\u}(v_{\lu}^{(\ell_{\lu})}) \checks{(\subseteq \frakR_{\Left}})$, this is a contradiction.
\end{itemize}
\checks{Then $s'$ is a directed string on $\widehat{s_as_d}$ whose ending point is $x$,
and it is not a subpath of any relation in $\frakR_{\Left}$.
We obtain that $s'$ is left maximal, that is, $\M(s')$ is isomorphic to the indecomposable injective module $E(x)$. }

\checks{Let} $\frakR_{\Left} \ne \varnothing$. Then there is $1\le i\le \ell_{\lu}$ such that $r_{\widehat{s_as_d}}^{\u}(v_0) =
pa_{\ell_{\lu}}\cdots a_i$, where $p$ is a path on $\widehat{s_as_d}$ which is \checks{of the following form:}
\[ \checks{ p = a_{\ell_{\lu}+n}\cdots a_{\ell_{\lu}+2}a_{\ell_{\lu}+1}\ (n\ge 2). } \]
In this case, the socle of $\M(s')$ is \checks{$S(x)$},
\checks{and we have $s' = a_{\ell_{\lu}+(n-1)}\cdots a_{\ell_{\lu}+2}$
(note that if $n=2$, then $s'$ is the path of length zero corresponding to $x$).
Since $a_{\ell_{\lu}+n}s'$ does not belong to $\I$,
we have that $\M(s')$ is non-injective.}
\checks{Therefore, $\M(s')$ is injective if and only if $\frakR_{\Left}=\varnothing$.}

\checks{We can consider the injectivity of $\M(s'')$ in a similar way, and then this proposition holds. }
%
\end{proof}

\begin{remark} \label{rmk:ELIS} \rm
\begin{itemize}
  \item[(1)] The effective intersection of two relations is introduced by Yang and Zhang in \cite{YZhang2019}. They computed the global dimension of string algebras which are of types $\A_n$ and $\tA_n$ with linear orientations.
  \item[(2)] Notice that any indecomposable module over a string algebra can be described by string, thus any ELIS with respect to a directed string $s$ can also be called an ELIS of $\M(s)$.
\end{itemize}
\end{remark}

We directly obtain the following corollary by Proposition \ref{prop:distance-proj case}.

\begin{corollary} \label{coro}
Let $\checks{p}=a_{l+1}\cdots a_{l+t}$ be a string as \Pic \ref{fig:hat p} and $\widehat{p}$ be a left maximal path in $\lmpset(\t(\checks{p}))$.
\begin{itemize}
  \item[\rm(1)] If $\frakR_{\widehat{p}}^{\u}(\checks{\t(p)}) = \varnothing$,
    then $\mho_1(\M(\checks{p}))$ contains an injective direct summand $\M(s')$,
    where $s'$ is a directed string on $\widehat{p}$.
    Furthermore, if there is another left maximal path $\widehat{q} \checks{ \in \lmpset(\t(p))}$,
    \checks{where $q=\alpha_1\alpha_2\cdots \alpha_m$ is a path ending at $\t(p)$ such that $\alpha_m\ne a_{l+t}$},
    then $\mho_1(\M(\checks{p})) \cong \M(s') \oplus \M(s'')$, where $s''$ is a directed string on $\widehat{q}$.
    In particular, if $\frakR_{\widehat{q}}^{\u}(\checks{\t(p)}) \ \checks{(=\frakR_{\widehat{q}}^{\u}(\checks{\t(q)})}\ = \varnothing$,
    then $\M(s'')$ is injective.
  \item[\rm(2)] If $\frakR_{\widehat{p}}^{\u}(\checks{\t(p)}) \ne \varnothing$ and $\mho_1(\M(p))$ is non-injective, then $\t(p)$ is a vertex of Type $(2^{\inner}, (\ge 0)^{\out})$ or \checks{$r_{\widehat{p}}^{\u}(\s(a_{l+t}))$} left-intersecting with \checks{$r_{\widehat{p}}^{\u}(\t(p))$}.
\end{itemize}
\end{corollary}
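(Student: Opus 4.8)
The plan is to deduce both items from the decomposition of $\mho_1(\M(p))$ furnished by Lemma \ref{lem:direct str. mod.}, combined with the injectivity criterion of Proposition \ref{prop:distance-proj case} specialised to each of the two arms meeting at $\t(p)$. First I would record the injective envelope of $\M(p)$: since $p$ is a directed string, $\M(p)$ is uniserial with simple socle $S(\t(p))$, so $\EE(\M(p))=E(\t(p))$, whose defining string is the peak obtained by gluing at $\t(p)$ the (at most two) left maximal paths ending at $\t(p)$. One of these arms is $\widehat{p}$; the other is present exactly when $\t(p)$ admits a second incoming arrow, i.e. when $\t(p)$ is of Type $(2^{\inner},(\ge 0)^{\out})$, in which case it is the second left maximal path $\widehat{q}$ with $\alpha_m\ne a_{l+t}$. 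By Lemma \ref{lem:direct str. mod.}, $\mho_1(\M(p))\cong\M(s')\oplus\M(s'')$, where $s'$ is a directed string on $\widehat{p}$, $s''$ is a directed string on $\widehat{q}$, and $s''$ is trivial (so $\M(s'')=0$) when the second arm is absent.

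For item (1), assume $\frakR_{\widehat{p}}^{\u}(\t(p))=\varnothing$, so no relation of $\I$ lies on $\widehat{p}$ up to $\t(p)$; as $A$ is finite-dimensional this forces $\widehat{p}$ to terminate at a source, so that $s'$ is left maximal. This is precisely the hypothesis under which the computation in the proof of Proposition \ref{prop:distance-proj case} (its items (1) and (2), applied with the arm $\widehat{s_as_d}$ replaced by $\widehat{p}$) shows that $s'$ is left maximal and that its endpoint is of Type $((\le 1)^{\inner},-)$; hence $\M(s')\cong E(\t(s'))$ is injective. If moreover the second arm $\widehat{q}$ exists, the identical argument applied to $\widehat{q}$ yields that $\M(s'')$ is injective as soon as $\frakR_{\widehat{q}}^{\u}(\t(p))=\frakR_{\widehat{q}}^{\u}(\t(q))=\varnothing$, which is the ``in particular'' clause.

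For item (2), I would argue by the number of incoming arrows at $\t(p)$. If $\t(p)$ is of Type $(2^{\inner},(\ge 0)^{\out})$ the first alternative holds and we are done. Otherwise $\t(p)$ has a single incoming arrow, the arm $\widehat{q}$ is absent, $s''$ is trivial, and $\mho_1(\M(p))\cong\M(s')$; thus the assumed non-injectivity of $\mho_1(\M(p))$ is the non-injectivity of $\M(s')$. Since $\frakR_{\widehat{p}}^{\u}(\t(p))\ne\varnothing$ has a unique minimal element $r_{\widehat{p}}^{\u}(\t(p))$, the ``$\frakR_{\Left}\ne\varnothing$'' half of Proposition \ref{prop:distance-proj case} identifies $\M(s')$ as non-injective precisely when some relation lying above it on $\widehat{p}$ left-intersects $r_{\widehat{p}}^{\u}(\t(p))$; the minimal such candidate is $r_{\widehat{p}}^{\u}(\s(a_{l+t}))$, the minimal relation ending weakly before the last arrow $a_{l+t}$ of $p$. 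This yields the second alternative and completes the corollary.

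The step I expect to be the main obstacle is the faithful transport of Proposition \ref{prop:distance-proj case} from the Type $(2^{\inner},2^{\out})$ module $D$ to a bare directed string $p$: one must match the distance functions $d_{\widehat{p}}(-,-)$ and the minimal relations $r_{\widehat{p}}^{\u}(-)$ across the two settings and, crucially, confront the fact that the proposition's obstruction set $\frakR_{\Left}$ ranges over the \emph{union} of all left maximal paths in $\lmpset(s_as_d)$, whereas item (1) is phrased with the single path $\widehat{p}$. I would therefore need to verify that, under $\frakR_{\widehat{p}}^{\u}(\t(p))=\varnothing$, the endpoint $\t(s')$ genuinely has at most one incoming arrow --- ruling out a second incoming arrow $\beta$ at $\t(s')$ with $\beta a_l\in\I$ lying off $\widehat{p}$ --- so that $E(\t(s'))$ is uniserial and $\M(s')$ is injective. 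Once this is secured, the remaining degenerate cases (zero-length $\wp'$ or $\wp''$, trivial $s''$, and the distinction between the possibly infinite $\widehat{p}$ and the finite relation-free arm actually carrying $s'$) are routine.
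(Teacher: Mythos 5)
Your overall route is the one the paper intends: the paper offers no argument for this corollary beyond the sentence that it is ``directly obtained'' from Proposition \ref{prop:distance-proj case}, and your reconstruction --- decompose $\mho_1(\M(p))$ by Lemma \ref{lem:direct str. mod.} into one directed-string summand per incoming arm at $\t(p)$, then run the injectivity analysis from the proof of Proposition \ref{prop:distance-proj case} on each arm --- is exactly how that citation has to be unpacked. Your discussion of part (2) and of the degenerate cases is consistent with this.

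However, the step you yourself flag as ``the main obstacle'' is a genuine gap, and you leave it open rather than closing it; worse, it cannot be closed from the hypothesis as literally stated. The issue is precisely the mismatch you identify: Proposition \ref{prop:distance-proj case} tests injectivity against $\frakR_{\Left}$, which is a union over \emph{all} left maximal paths in $\lmpset(\cdot)$, whereas item (1) assumes only $\frakR_{\widehat{p}}^{\u}(\t(p))=\varnothing$ for one chosen $\widehat{p}$. A relation $\beta a\in\I$ whose first arrow $\beta$ ends at an interior vertex of $\widehat{p}$ but is not an arrow of $\widehat{p}$ is invisible to $\frakR_{\widehat{p}}^{\u}(\t(p))$, yet it produces a second incoming arrow at $\t(s')$, so $E(\t(s'))$ is not uniserial and $\M(s')$ is not injective. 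Concretely, take $\Q$ with arrows $c\colon 0\to 1$, $\delta\colon 5\to 1$, $a_1\colon 1\to 2$, $a_2\colon 2\to 3$ and $\I=\langle \delta a_1\rangle$; this is a string (even gentle) algebra. For $p=a_2$ and $\widehat{p}=ca_1a_2\in\lmpset(3)$ one has $\frakR_{\widehat{p}}^{\u}(3)=\varnothing$, since no subpath of $ca_1a_2$ lies in $\I$, but $E(3)\cong\M(ca_1a_2)$ and hence $\mho_1(\M(a_2))\cong \M(c)$, which is indecomposable and non-injective because $E(1)\cong\M(c\delta^{-1})$ is not uniserial. So the verification you defer --- that $\t(s')$ has at most one incoming arrow --- genuinely fails, and your argument for item (1) only goes through if the hypothesis is strengthened to the emptiness of $\bigcup_{\widehat{p}'}\frakR_{\widehat{p}'}^{\u}(\t(p))$ over all $\widehat{p}'\in\lmpset(\t(p))$ ending in $a_{l+t}$, i.e.\ to the analogue of $\frakR_{\Left}=\varnothing$ actually used in the proof of Proposition \ref{prop:distance-proj case}. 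You should either prove the statement under that corrected hypothesis or flag the discrepancy explicitly; as written, the proof of item (1) is incomplete at exactly the point you anticipated.
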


\begin{remark} \rm \label{rem:a and b}
By Proposition \ref{prop:0-cosyzygy-I} and Corollary \ref{coro}, we have obtained two kinds intersection relations.
\begin{itemize}
  \item[(I)]
    \checks{
    $r_2 = r_{\widehat{s_as_d}}^{\u}(v_0)$ left-intersecting with $r_1 = r_{\widehat{s_as_d}}^{\u}(v_{\rd}^{(\ell_{\rd})})$ such that
    $(r_2,r_1)$ is an ELIS of $P(v_0) \cong \M(s_c^{-1}s_d)$.
    Here, $v_0$ is the source of the string corresponding to $P(v_0)$, see \Pic \ref{fig:int a}. Indeed, this type corresponds to (a).
    }
%
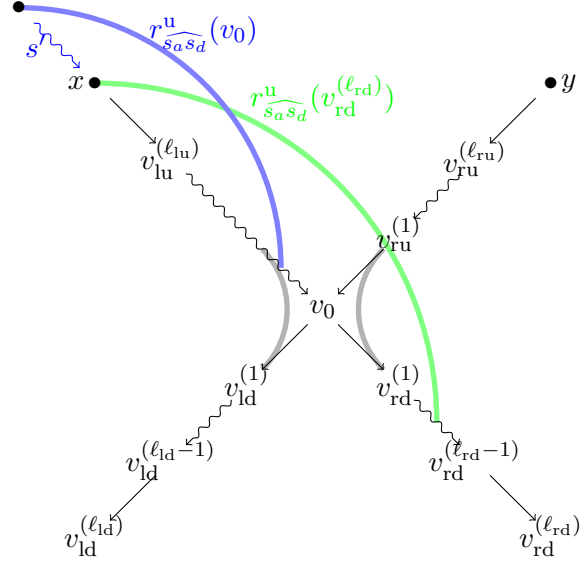
\begin{figure}[htbp]
\begin{tikzpicture}
\draw[green!50][line width=2pt] ( 1.5,-1.5) to[out= 90,in=   0] (-3, 3);
\draw[green] (-0,2.8) node{$r_{\widehat{s_as_d}}^{\u}(v_{\rd}^{(\ell_{\rd})})$};
\draw[blue!50][line width=2pt] (-0.55, 0.55) to[out= 90,in=  0] (-4.0, 4.0);
\draw[blue] (-1.57,3.65) node{$r_{\widehat{s_as_d}}^{\u}(v_0)$};
\draw (0,0) node{$v_0$};
\draw ( 1, 1) node{$v_{\ru}^{(1)}$};
\draw ( 1,-1) node{$v_{\rd}^{(1)}$};
\draw (-1,-1) node{$v_{\ld}^{(1)}$};
\draw [black!30][line width = 2pt] (-0.8, 0.8) to[out= -45,in= 45] (-0.8,-0.8);
\draw [black!30][line width = 2pt] ( 0.8, 0.8) to[out=-135,in=135] ( 0.8,-0.8);
\draw[->][decorate, decoration
      ={snake,amplitude=.4mm,segment length=2mm,post length=1mm}] (-1.8, 1.8) -- (-0.2, 0.2);
\draw (-2, 2) node{$v_{\lu}^{(\ell_{\lu})}$};
\draw[->] (-2.8, 2.8) -- (-2.2, 2.2);
\draw (-3.0, 3.0) node{$\bullet$};
\draw (-3.0, 3.0) node[left]{$x$};
\draw[->][blue, decorate, decoration
      ={snake,amplitude=.4mm,segment length=2mm,post length=1mm}] (-3.8, 3.8) -- (-3.2, 3.2);
\draw[blue] (-3.5,3.5) node[left]{$s'$};
\draw (-4.0, 4.0) node{$\bullet$};
\draw[->][decorate, decoration
      ={snake,amplitude=.4mm,segment length=2mm,post length=1mm}] ( 1.8, 1.8) -- ( 1.2, 1.2);
\draw ( 2, 2) node{$v_{\ru}^{(\ell_{\ru})}$};
\draw[->] ( 2.8, 2.8) -- ( 2.2, 2.2);
\draw ( 3.0, 3.0) node{$\bullet$};
\draw ( 3.0, 3.0) node[right]{$y$};
\draw[->] ( 0.8, 0.8) -- ( 0.2, 0.2);
\draw[->] (-0.2,-0.2) -- (-0.8,-0.8);
\draw[->][decorate, decoration
      ={snake,amplitude=.4mm,segment length=2mm,post length=1mm}] (-1.2,-1.2) -- (-1.8,-1.8);
\draw (-2,-2) node{$v_{\ld}^{(\ell_{\ld}-1)}$};
\draw[->] (-2.2,-2.2) -- (-2.8,-2.8);
\draw (-3.0,-3.0) node{$v_{\ld}^{(\ell_{\ld})}$};
\draw[->] ( 0.2,-0.2) -- ( 0.8,-0.8);
\draw[->][decorate, decoration
      ={snake,amplitude=.4mm,segment length=2mm,post length=1mm}] ( 1.2,-1.2) -- ( 1.8,-1.8);
\draw ( 2,-2) node{$v_{\rd}^{(\ell_{\rd}-1)}$};
\draw[->] ( 2.2,-2.2) -- ( 2.8,-2.8);
\draw ( 3.0,-3.0) node{$v_{\rd}^{(\ell_{\rd})}$};
\end{tikzpicture}
\caption{An ELIS given by $\M(s_c^{-1}s_d)$ ($\ell_{\ld}\ge 1$, $\ell_{\rd}\ge 1$)}
\label{fig:int a}
\end{figure}

  \item[(II)]
  \checks{
  In the case of $\length(s_c) = 0$ (the case of $\length(s_d)=0$ is symmetric),
  $r_2=r_{\widehat{s_as_d}}^{\u}(v_{\lu}^{(1)})$ left-intersecting with $r_1=r_{\widehat{s_as_d}}^{\u}(v_{\rd}^{(\ell_{\rd})})$ such that $(r_2,r_1)$ is an ELIS of $P(v_0) \cong \M(s_d)$.
  Here, $v_{\lu}^{(1)}$ is not the source of the string corresponding to $P(v_0)$, see \Pic \ref{fig:int b}. Indeed, this type corresponds to (b).
  }
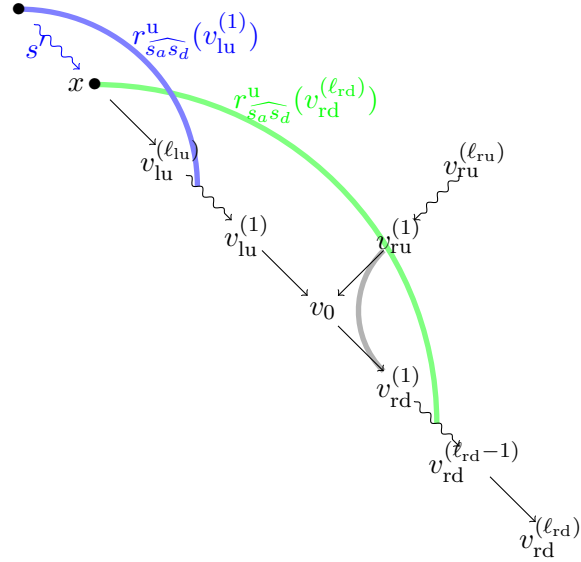
\begin{figure}[htbp]
\begin{tikzpicture}
\draw[green!50][line width=2pt] ( 1.5,-1.5) to[out= 90,in=   0] (-3, 3);
\draw[green] (-0.22,2.8) node{$r_{\widehat{s_as_d}}^{\u}(v_{\rd}^{(\ell_{\rd})})$};
\draw[blue!50][line width=2pt] (-1.65, 1.65) to[out= 90,in=  0] (-4.0, 4.0);
\draw[blue] (-1.67,3.65) node{$r_{\widehat{s_as_d}}^{\u}(v_{\lu}^{(1)})$};
\draw (0,0) node{$v_0$};
\draw (-1, 1) node{$v_{\lu}^{(1)}$};
\draw ( 1, 1) node{$v_{\ru}^{(1)}$};
\draw ( 1,-1) node{$v_{\rd}^{(1)}$};
\draw [black!30][line width = 2pt] ( 0.8, 0.8) to[out=-135,in=135] ( 0.8,-0.8);
\draw[->] (-0.8, 0.8) -- (-0.2, 0.2);
\draw[->][decorate, decoration
      ={snake,amplitude=.4mm,segment length=2mm,post length=1mm}] (-1.8, 1.8) -- (-1.2, 1.2);
\draw (-2, 2) node{$v_{\lu}^{(\ell_{\lu})}$};
\draw[->] (-2.8, 2.8) -- (-2.2, 2.2);
\draw (-3.0, 3.0) node{$\bullet$};
\draw (-3.0, 3.0) node[left]{$x$};
\draw[->][blue, decorate, decoration
      ={snake,amplitude=.4mm,segment length=2mm,post length=1mm}] (-3.8, 3.8) -- (-3.2, 3.2);
\draw[blue] (-3.5,3.5) node[left]{$s'$};
\draw (-4.0, 4.0) node{$\bullet$};
\draw[->][decorate, decoration
      ={snake,amplitude=.4mm,segment length=2mm,post length=1mm}] ( 1.8, 1.8) -- ( 1.2, 1.2);
\draw ( 2, 2) node{$v_{\ru}^{(\ell_{\ru})}$};
\draw[->] ( 0.8, 0.8) -- ( 0.2, 0.2);
\draw[->] ( 0.2,-0.2) -- ( 0.8,-0.8);
\draw[->][decorate, decoration
      ={snake,amplitude=.4mm,segment length=2mm,post length=1mm}] ( 1.2,-1.2) -- ( 1.8,-1.8);
\draw ( 2,-2) node{$v_{\rd}^{(\ell_{\rd}-1)}$};
\draw[->] ( 2.2,-2.2) -- ( 2.8,-2.8);
\draw ( 3.0,-3.0) node{$v_{\rd}^{(\ell_{\rd})}$};
\end{tikzpicture}
\caption{An ELIS given by $\M(s_d)$ ($\ell_{\rd}\ge 1$)}
\label{fig:int b}
\end{figure}
\end{itemize}
\end{remark}

\section{Proof of the main results}
In this section, we give a proof of the main results. Let $A=\kk\Q/\I$ be a string algebra. Assume that $\{r_i\}_{i=1}^{n}=\{r_n,\ldots,r_1\}$ is an ELIS of projective module $P(v_0)$ for vertex $v_0$ in $\Q$. We define the length of $\ELIS(P(v_0))$ as:
\[\length(\ELIS(P(v_0))) :=
\begin{cases}
\sup\limits_{\{r_i\}_{i=1}^{n} \in \ELIS(P(v_0))} \length(\{r_i\}_{i=1}^{n}), & \text{ if $\ELIS(P(v_0)) \ne \varnothing$; } \\
\quad\quad\quad\quad\quad 0, &\text{ if $\ELIS(P(v_0)) = \varnothing$. }
\end{cases}\]

\begin{lemma} \label{lemm:ELIS and cosyzygy}
If $P(v_0)$ has an $\mathrm{ELIS}$ $\{r_i\}_{i=1}^{n} = (r_n, \ldots, r_1)$, then $\mho_{n+1}(P(v_0)) \ne 0$.
\end{lemma}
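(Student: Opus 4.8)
The plan is to prove the statement by induction on $n$, after recording two elementary reductions. First, the iterated cosyzygy satisfies $\mho_{n+1}(P(v_0)) = \mho_1(\mho_n(P(v_0)))$ directly from the definition of $\mho_{i+1}$, and $\mho_1$ commutes with finite direct sums because the injective envelope of a direct sum is the direct sum of the injective envelopes. Second, an indecomposable module $Y$ is injective if and only if $\mho_1(Y) = 0$. Combining these, in order to obtain $\mho_{n+1}(P(v_0)) \ne 0$ it suffices to exhibit a \emph{non-injective} indecomposable direct summand of $\mho_n(P(v_0))$, since applying $\mho_1$ to that summand contributes a nonzero term. I would therefore aim at the stronger assertion: an ELIS $(r_n,\ldots,r_1)$ of $P(v_0)$ forces $\mho_n(P(v_0))$ to possess a non-injective indecomposable summand $M_n$, which for $n \ge 2$ is a string module $\M(s_n)$ attached to a directed string $s_n$ on which $r_n$ is the minimal relation $r_{\widehat{s_n}}^{\u}(\t(s_n))$ with $\frakR_{\widehat{s_n}}^{\u}(\t(s_n)) \ne \varnothing$.

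For $n = 1$ the existence of the relation $r_1$ means, by the construction preceding Definition \ref{def:ELIS} and by Remark \ref{rem:a and b}, that $v_0$ is a strictly relational vertex; Proposition \ref{prop:0-cosyzygy-I} then gives $\mho_1(P(v_0)) \cong D_{\Left} \oplus D \oplus D_{\Right}$ with $D$ non-injective, so $M_1 = D$ works (and when $P(v_0)$ is itself a directed string I would instead invoke Lemma \ref{lem:direct str. mod.} together with Corollary \ref{coro}). For the inductive step, suppose the claim holds for $n-1$, so $\mho_{n-1}(P(v_0))$ has a non-injective indecomposable summand $M_{n-1}$ carrying $r_{n-1}$. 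By the remark following Definition \ref{def:ELIS}, the pair $(r_n, r_{n-1})$ is an ELIR of the string corresponding to $\mho_n(P(v_0))$; decomposing $\mho_1(M_{n-1})$ into string modules on directed strings via Lemma \ref{lem:direct str. mod.} (or via Proposition \ref{prop:distance-proj case} at the transition out of the module $D$), the effective left-intersection of $r_n$ with $r_{n-1}$ pins down one summand $\M(s_n) \le_{\oplus} \mho_n(P(v_0))$ for which $\frakR_{\widehat{s_n}}^{\u}(\t(s_n)) \ne \varnothing$. By the injectivity criterion of Proposition \ref{prop:distance-proj case} (and its extension in Corollary \ref{coro}), such a summand is non-injective, which both completes the induction and, by the reduction above, yields $\mho_{n+1}(P(v_0)) \ne 0$.

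The main obstacle I expect is the bookkeeping that identifies, at each stage, the precise indecomposable summand of $\mho_i(P(v_0))$ on which the relation $r_i$ lives, together with the verification that it is exactly the \emph{effectiveness} of the intersection — the normalization $r_{i+1} = r_{\widehat{s_i}}^{\u}(\s(r_{i-1}))$ selected in steps (a)--(d) of the construction — that guarantees $\frakR_{\widehat{s_i}}^{\u}(\t(s_i)) \ne \varnothing$ and hence genuine non-injectivity. As Example \ref{exp:ELIS} illustrates, a non-effective left-intersection (the pair $(r_*, r_1)$ there) need not enlarge the cosyzygy, so the effective condition cannot be dropped. A secondary technical point is keeping the argument uniform across the vertex types: the first cosyzygy is read off from Proposition \ref{prop:0-cosyzygy-I} for the types $(2^{\inner}, 2^{\out})$, $(1^{\inner}, 2^{\out})$ and $(0^{\inner}, 2^{\out})$, and from Lemma \ref{lem:direct str. mod.} when $P(v_0)$ is a directed string, after which every summand carrying an ELIS relation corresponds to a directed string, so that Lemma \ref{lem:direct str. mod.}, Proposition \ref{prop:distance-proj case} and Corollary \ref{coro} govern all later steps.
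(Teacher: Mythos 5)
Your proposal follows essentially the same route as the paper's own proof: an induction on the length of the ELIS that tracks, via Proposition \ref{prop:0-cosyzygy-I}, Lemma \ref{lem:direct str. mod.}, Proposition \ref{prop:distance-proj case} and Corollary \ref{coro}, a distinguished directed-string summand of each successive cosyzygy whose non-injectivity is certified precisely by the existence of the next effective left-intersecting relation. The differences are presentational: your explicit reduction to \emph{exhibiting a non-injective indecomposable summand of $\mho_n(P(v_0))$} is only implicit in the paper, which instead organizes the induction around the vertex type of $v_0$ and around which maximal paths ($\lmpset(s_as_d)/\lmpset(s_bs_c)$ versus $\lmpset(\wp_{\Left})/\lmpset(\wp_{\Right})$) carry the relations $r_i$ --- a case split your write-up should still perform, since for an ELIS living on $\lmpset(\wp_{\Left})$ or $\lmpset(\wp_{\Right})$ the non-injective summand at the first stage is $D_{\Left}$ or $D_{\Right}$ rather than $D$, and $v_0$ need not be strictly relational.
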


\begin{proof}
Keep the notations from \checks{\Pic \ref{fig:distance-proj case}},
\checks{the proof is divided into two cases as follows:}
\begin{itemize}
  \item[(1)] \checks{$v_0$ is a vertex of the Type $((\le 2)^{\inner}, 2^{\out})$;}
  \item[(2)] \checks{$v_0$ is a vertex of the Type $((\le 2)^{\inner}, (\le 1)^{\out})$.}
\end{itemize}

\checks{In (1), $\soc P(v_0)$ is a direct sum of two simple modules $S(\t(s_c))$ and $S(\t(s_d))$.}
\checks{We assume that $v_0$ is a vertex of the Type $(2^{\inner},2^{\out})$}
\checks{(the cases of $(1^{\inner}, 2^{\out})$ and $(0^{\inner}, 2^{\out})$ is similar).}
\checks{Then $\ell_{\lu}, \ell_{\ru}, \ell_{\ld}, \ell_{\rd} \ge 1$}.
Let $s_a = a_{\ell_{\lu}}\cdots a_1$, $s_b = b_{\ell_{\ru}}\cdots b_1$,
$s_c = c_1\cdots c_{\ell_{\ld}}$ and $s_d = d_1\cdots d_{\ell_{\rd}}$.
Then $\M^{-1}(P(v_0)) = s_c^{-1}s_d$. By Proposition \ref{prop:0-cosyzygy-I}, $\mho_1(P(v_0)) \cong D_{\Left}\oplus D\oplus D_{\Right}$. Thus, we have the following two cases:
\begin{itemize}
    \item[(1.1)] all relations in $\{r_i\}_{i=1}^{n}$ are on some left maximal path in
     $\lmpset(s_as_d)$ or $\lmpset(s_bs_c)$.
    \item[(1.2)] all relations in $\{r_i\}_{i=1}^{n}$ are on some left maximal path in
     $\lmpset(\wp_{\Left})$ or $\lmpset(\wp_{\Right})$.
\end{itemize}

Next, we prove subcase (1.1) by induction on the length of ELIS of $P(v_0)$.
Without loss of generality, \checks{let $\widehat{p} = \widehat{s_as_d} \in \lmpset(s_as_d)$, and}
assume that all $r_i$ are \checks{subpaths} of $\widehat{p}$. Then $r_1 = r^{\u}_{\widehat{p}}(v_{\rd}^{(\ell_{\rd})})$. By Proposition \ref{prop:0-cosyzygy-I}, $0\ne D \le_{\oplus} \mho_1(P(v_0))$.
Now assume that $P(v_0)$ has an ELIS $(r_2, r_1)$. Then $r_2 = r^{\u}_{\widehat{p}}(v_0)$ and $v_0$ is a vertex on $r_1$.
Since $v_0$ is not \checks{gently relational}, $D$ is not injective again by Proposition \ref{prop:0-cosyzygy-I}.
We obtain $0\ne \mho_1(D) \le_{\oplus} \mho_2(P(v_0))$.
By Lemma \ref{lem:direct str. mod.}, we have
\begin{center}
  $\mho_1(D) \cong \M(s_1') \oplus \M(s_1'')$,
\end{center}
where \checks{$s_1'$ and $s_1''$ are directed strings.}
\checks{If $P(v_0)$ has an ELIS $(r_3,r_2, r_1)$ on $\widehat{p}$,
that is, $n\ge 3$, then one of $s_1'$ and $s_1''$ can be seen as a subpath of $\widehat{p}$,
see \Pic \ref{fig:ELIS and cosyzygy}. }
\begin{figure}[htbp]
  \centering
  \includegraphics[width=16cm]{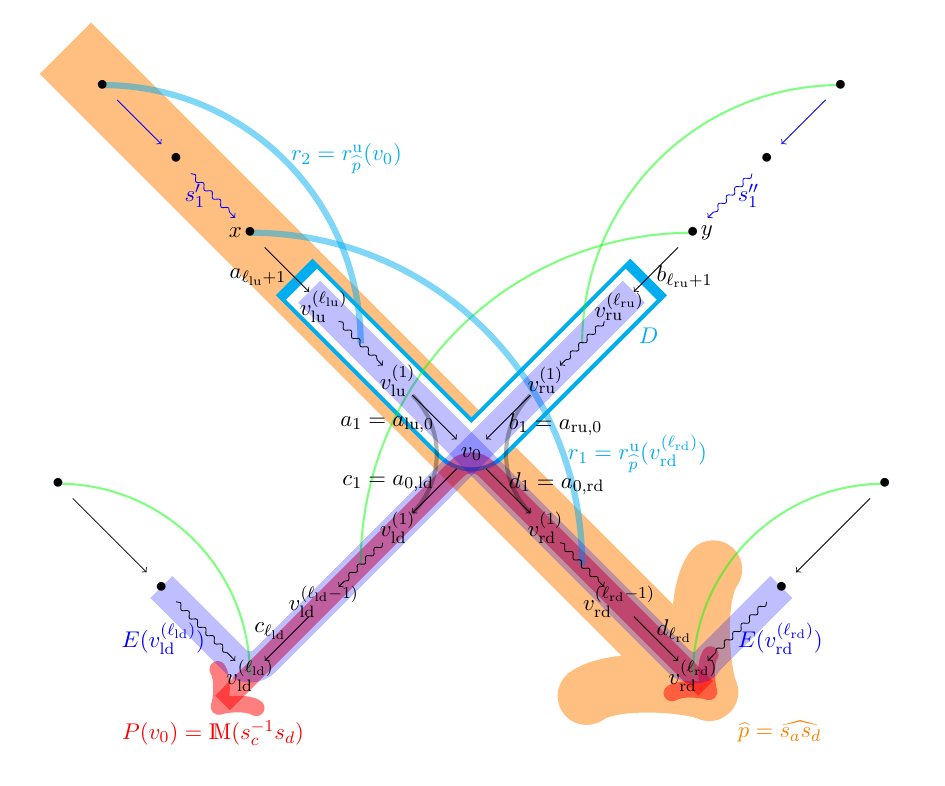}\\
\caption{$\EE(P(v_0)) \cong E(v_{\ld}^{(\ell_{\ld})}) \oplus E(v_{\rd}^{(\ell_{\rd})})$ }
\label{fig:ELIS and cosyzygy}
\end{figure}
\checks{Let $s_1'$ be a subpath of $\widehat{p}$, then we have $\soc \M(s_1') \cong S(\s(r_1)) = S(x) \ne 0$. Thus,}
\[ 0\ne \M(s_1')  \le_{\oplus} \mho_1(D) \le_{\oplus} \mho_2(P(v_0)).\]
The injective envelope of $\M(s_1')$ is
\[ e_0^{\M(s_1')} : \M(s_1') \to \EE(\soc\M(s_1')) \cong E(\s(r_1)), \]
where $E(\s(r_1))$ is decided by the type of the vertex $\s(r_1)$.
By Lemma \ref{lem:direct str. mod.},
$$\coker(e_0^{\M(s_1')}) \cong \M(s_2') \oplus \M(s_2''),$$
\checks{where $s_2'$ and $s_2''$ are directed strings, and one of $s_2'$ and $s_2''$ is a string on $\widehat{p}$.}

If $\frakR^{\u}_{\widehat{p}}(\s(r_1)) = \varnothing$ or any $r \in \frakR^{\u}_{\widehat{p}}(\s(r_1))$
is a relation such that $(r, r_2, r_1)$ is not an ELIS of $P(v_0)$, then \checks{we have $n=2$ in this case.}
By Proposition \ref{prop:distance-proj case}, $\M(s_2')$ is injective with socle $S(\s(r_2))$. Thus,
\begin{align}\label{formula:ELIS and cosyzygy}
0\ne \M(s_2') \le_{\oplus} \coker(e_0^{\M(s_1')})
\le_{\oplus} \mho_2(D) \le_{\oplus} \mho_3(P(v_0)).
\end{align}
Otherwise, $n\ge 3$, that is, we have that $(r_3,r_2,r_1)$ is an ELIS of $P(v_0)$.
Then $r_3 = r_{\widehat{p}}^{\u}(\s(r_1))$ and $\M(s_2')$ is non-injective.
Thus, (\ref{formula:ELIS and cosyzygy}) holds and further $\mho_3(P(v_0))\ne 0$.


\checks{Repeating the steps as above, we can obtain an ELIS $(r_n,\ldots,r_2,r_1)$ of $P(v_0)$ and a non-zero direct summand of $\mho_{n+1}(P(v_0))$ by induction.}

\checks{In the subcase (1.2), we need to compute the cosyzygies of $D_{\Left}$ and $D_{\Right}$.
Here, $D_{\Left}$ and $D_{\Right}$ are modules corresponding to directed string.
In (2), the string of $P(v_0)$ is directed.
We can compute their cosyzygies by using the method similar to compute the cosyzygy of $\M(s_1')$,
and prove the other cases of this lemma.}
\end{proof}

Now we can give our main result.

\begin{theorem} \label{thm:main}
Let $A=\kk\Q/\I$ be a string algebra.
\begin{itemize}
  \item [\rm(1)] If $A$ is not self-injective, then
    \[ \id A = \sup_{v\in \Q_0} \length(\ELIS(P(v))) + 1. \]
  \item [\rm(2)] $A$ is Gorenstein if and only if all ELISs of $P(v)$ have finite length for any vertex $v$ in $\Q_0$,

\end{itemize}
\end{theorem}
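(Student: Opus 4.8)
The plan is to establish the theorem in two movements: first prove the injective-dimension formula in (1), then deduce the Gorenstein characterization in (2) as a formal consequence. The strategy rests on converting the computation of $\id A$ into a supremum over the indecomposable projective summands of $A_A$, and then matching each summand's injective resolution length against the combinatorial data of its ELISs. Since $A_A \cong \bigoplus_{v\in\Q_0} P(v)$ and injective dimension of a finite direct sum is the supremum of the injective dimensions of the summands, I would begin by reducing to $\id A = \sup_{v\in\Q_0}\id P(v)$, so that it suffices to prove $\id P(v) = \length(\ELIS(P(v))) + 1$ for each non-injective $P(v)$ (with the convention that injective $P(v)$ contribute $0$ and are excluded once $A$ is not self-injective).

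\textbf{The core estimate.} For a fixed vertex $v_0$ the aim is the two-sided bound $\id P(v_0) \le \length(\ELIS(P(v_0)))+1$ and $\id P(v_0) \ge \length(\ELIS(P(v_0)))+1$. The lower bound is essentially Lemma \ref{lemm:ELIS and cosyzygy}: given an ELIS $(r_n,\dots,r_1)$ of $P(v_0)$ of length $n$, that lemma produces $\mho_{n+1}(P(v_0))\ne 0$, forcing $\id P(v_0)\ge n+1$; taking the supremum over all ELISs yields $\id P(v_0)\ge \length(\ELIS(P(v_0)))+1$. For the upper bound I would argue the converse direction of the same inductive machinery: I would show that if $\length(\ELIS(P(v_0)))=n<\infty$, then $\mho_{n+2}(P(v_0))=0$, i.e.\ the cosyzygy process terminates in an injective after exactly $n+1$ steps. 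The key is Proposition \ref{prop:distance-proj case} together with Lemma \ref{lem:direct str. mod.}: by Proposition \ref{prop:0-cosyzygy-I} the first cosyzygy $\mho_1(P(v_0))$ splits as $D_{\Left}\oplus D\oplus D_{\Right}$ where the two outer summands correspond to directed strings and hence (by Lemma \ref{lem:direct str. mod.}) have cosyzygies that are again directed strings whose resolutions only terminate via maximality, contributing nothing beyond the ELIS count; meanwhile each further cosyzygy of $D$ is controlled by whether a new effective left-intersecting relation $r_{i+1}=r^{\u}_{\nota}(\s(r_{i-1}))$ exists. When no such relation exists, Proposition \ref{prop:distance-proj case} guarantees injectivity of the relevant summand and the resolution stops; this is exactly the statement that the ELIS cannot be prolonged. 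Thus the length of a \emph{maximal} ELIS governs precisely the step at which every indecomposable summand of the running cosyzygy becomes injective.

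\textbf{The main obstacle} I anticipate is the bookkeeping of the branching. At each cosyzygy step a summand may split into two directed-string modules (Lemma \ref{lem:direct str. mod.}), only one of which lies on the distinguished left maximal path $\nota$ carrying the ELIS; the other lives on a different maximal path and could in principle spawn its own longer ELIS. I would need to argue that the supremum over all vertices and all choices of left maximal path is genuinely captured by $\sup_v \length(\ELIS(P(v)))$ — in particular that the "off-path" summands are either injective or contribute ELISs already counted at some vertex, so no length is undercounted. The cleanest way to handle this is to track, via Corollary \ref{coro}, that an off-path summand $\M(s'')$ on $\widehat q$ is injective precisely when $\frakR^{\u}_{\widehat q}(\t(p))=\varnothing$, i.e.\ when it admits no continuing ELIR, so it terminates the resolution along that branch without increasing the overall length. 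I would then organize the induction on $\length(\ELIS(P(v_0)))$ simultaneously over all branches, concluding that $\id P(v_0)$ equals one plus the supremum of ELIS lengths attached to $P(v_0)$.

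\textbf{Deducing (2).} Part (2) follows formally. By definition $A$ is Gorenstein iff $\id A = \id({_AA})<\infty$, and the introduction already reduces this to $\id(A_A)<\infty$ since $\id({_AA})=\id({A_A})$. If $A$ is self-injective then $\id A=0$ and every ELIS is empty, so both sides hold trivially; otherwise part (1) gives $\id A = \sup_{v\in\Q_0}\length(\ELIS(P(v)))+1$, which is finite if and only if $\sup_{v\in\Q_0}\length(\ELIS(P(v)))<\infty$, i.e.\ if and only if every ELIS of every $P(v)$ has finite length (equivalently the ELISs admit a supremum). This gives the stated equivalence.
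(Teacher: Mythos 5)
Your proposal follows essentially the same route as the paper: reduce $\id A$ to $\sup_v \id P(v)$, obtain the lower bound $\id P(v)\ge n+1$ from Lemma \ref{lemm:ELIS and cosyzygy}, obtain the upper bound from Proposition \ref{prop:distance-proj case} (the terminal cosyzygy $\mho_{n+1}(P(v))$ is a non-zero injective once the ELIS cannot be prolonged), and deduce (2) formally from (1). The branching issue you flag is real but is the content already handled inside the proof of Lemma \ref{lemm:ELIS and cosyzygy} (subcases (1.1) and (1.2)) rather than a new step needed in the proof of the theorem itself.
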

\begin{proof}
For any vertex $v$ in $\Q$, we claim that
\[ \id P(v) = \begin{cases}
\length(\ELIS(P(v))) + 1, & \text{ if $P(v)$ is non-injective; }\\
\quad\quad\quad\quad\ 0, & \text{ if $P(v)$ is injective. }
\end{cases} \]
In this case, it is clear that if $A$ is not self-injective, then
\[ \id A = \sup_{v\in \Q_0} \length(\ELIS(P(v))) + 1. \] Moreover, $A$ is Gorenstein if and only if all ELISs of $P(v)$ have finite length. Now we prove the claim. If $\length(\ELIS(P(v))) = \infty$,
then there exists a sequence $(\ldots, r_2, r_1)$ of relations such that for any $N\ge 1$, $\{r_i\}_{N \ge i\ge 1}$ is an ELIS of $P(v)$.
By Lemma \ref{lemm:ELIS and cosyzygy}, $\mho_{N+1}(P(v)) \ne 0$.
Thus, we have $\id P(v) = \infty$.

If $\length(\ELIS(P(v))) = n < \infty$, let $\{r_i\}_{n \ge i\ge 1}$ be an ELIS with length $n$.
then $r_n$ is a relation in $\frakR_{\widehat{p}}^{\u}(v)$,
where $\widehat{p}$ is some left maximal path such that all relations in $\{r_i\}_{n \ge i\ge 1}$ are paths on the $\widehat{p}$.
Since there exists no relation $r_{n+1}$ such that $\{r_i\}_{n+1 \ge i\ge 1}$ is an ELIS of $P(v)$,
we have $\mho_{n+1}(P(v))$ is a non-zero injective module
by Propositions \ref{prop:distance-proj case}.
\end{proof}

\begin{remark} \rm \label{rek:type b}
Let $A=\kk\Q/\I$ be a gentle algebra \cite{ASS2006} and $(r_n,\ldots,r_1)$ be an ELIS with respect to indecomposable projective $A$-module $P(v)$.
Then \checks{all $r_i$ are relations of length two, and}
$r_{i+1}$ effective left-intersecting with $r_i$ are always intersection relations of type (II) in Remark \ref{rem:a and b} for any $\checks{2} \leq i\leq n-1$.
\end{remark}

Gei\ss ~and Reiten proved that gentle algebras are Gorenstein in \cite{GR2005}. Recently, Amiot, Plamondon, and Schroll \cite{APS2023} gave a characterization of the silting objects of the derived category of gentle algebras in terms of graded curves. According to this characterization, they \checks{gave new proof that gentle algebras are Gorenstein}. By using the effective left-intersecting sequence, we can also
obtain the following corollary.

\begin{corollary}[\rm \!\cite{GR2005}]
Gentle algebras are Gorenstein.
\end{corollary}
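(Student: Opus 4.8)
The plan is to deduce this from Theorem \ref{thm:main}(2): for a gentle algebra $A=\kk\Q/\I$ it suffices to prove that every $\ELIS$ of every indecomposable projective $P(v)$ has finite length. First I would record the structural simplification that gentleness buys. Since the relations of a gentle algebra are paths of length two, a length-two path is a subpath of a generator of $\I$ if and only if it already lies in $\I$. Looking at a vertex $v_0$ of Type $(2^{\inner},2^{\out})$ as in \Pic \ref{fig:2in2out}, where $a_1c_1,b_1d_1\in\I$, the gentle condition forces $a_1d_1\notin\I$ and $b_1c_1\notin\I$; likewise a vertex of Type $(1^{\inner},2^{\out})$ cannot have both of its length-two paths in $\I$. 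Hence a gentle algebra has \emph{no strictly relational vertices}, and by Proposition \ref{prop:0-cosyzygy-I} the middle summand $D$ of $\mho_1(P(v_0))$ is always injective.

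Consequently every non-injective summand appearing in a cosyzygy of $P(v)$ corresponds to a directed string, and its further cosyzygies are again sums of such modules by Lemma \ref{lem:direct str. mod.}. Thus the whole $\ELIS$ of $P(v)$ is produced by successive backward extensions of directed strings, and by Remark \ref{rek:type b} each effective left-intersection is of type (II) with both relations of length two. I would then read off the combinatorics: consecutive relations $r_{i+1},r_i$ share exactly one arrow, so that, writing the arrows met along the left maximal path as $\dots,x_{-1},x_0,x_1,\dots$, the relations of the $\ELIS$ are successive length-two pairs $x_{k}x_{k+1}$, all of which lie in $\I$. In particular the $\ELIS$ is governed by the deterministic predecessor rule $x_{k+1}\mapsto x_{k}$, where $x_k$ is the unique arrow with $x_kx_{k+1}\in\I$; this is well defined precisely because $A$ is gentle.

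The heart of the proof is to show this sequence of relations cannot repeat, so that, there being finitely many relations, every $\ELIS$ is finite. Suppose some relation occurred twice in an $\ELIS$ of $P(v)$. By the determinism just noted the arrow sequence would be periodic, hence would trace an oriented cycle $C$ all of whose consecutive arrows compose to $0$ (a full relation cycle). I would then obtain a contradiction from a case analysis on the vertices of $C$. If every vertex of $C$ were of Type $(1^{\inner},1^{\out})$, no arrow would leave $C$; since $\Q$ is connected this forces $\Q=C$, so $A$ is a self-injective Nakayama algebra, all of whose projectives are injective and admit no nonempty $\ELIS$ -- contradicting the assumed repetition. Otherwise some vertex $w$ of $C$ carries a second arrow. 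If $w$ has a second \emph{outgoing} arrow it is of Type $(2^{\inner},2^{\out})$ or $(1^{\inner},2^{\out})$, hence gently relational, so the relevant $D$ is injective (Propositions \ref{prop:0-cosyzygy-I} and \ref{prop:distance-proj case}) and the $\ELIS$ stops at $w$ before completing the cycle. If instead $w$ has a second \emph{incoming} arrow $\delta$, then by gentleness $\delta$ is \emph{not} blocked at $w$, so the backward extension computing the cosyzygy escapes along $\delta$ rather than continuing around the cycle; thus the cyclic relation at $w$ is never used as an intersecting relation, again breaking the periodicity. Either way the supposed repetition is impossible.

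Therefore the relations in any $\ELIS$ of a projective are pairwise distinct, every $\ELIS$ is finite, and Theorem \ref{thm:main}(2) yields that $A$ is Gorenstein. The main obstacle, and the only place where more than bookkeeping is required, is exactly the cyclic case of the previous paragraph: one must rule out that the effective intersections march forever around an oriented cycle, and the two escape mechanisms above -- a gently relational $(2^{\out})$-vertex, or an unblocked second incoming arrow -- are what make connectedness of $\Q$ and the absence of strictly relational vertices bite.
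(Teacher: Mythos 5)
Your overall skeleton agrees with the paper's: reduce via Theorem \ref{thm:main}(2) to showing every ELIS of every $P(v)$ is finite, observe that gentleness makes the predecessor map $\alpha_{i}\mapsto\alpha_{i+1}$ (the unique arrow with $\alpha_{i+1}\alpha_i\in\I$) deterministic and injective, and conclude that an unbounded ELIS would have to wind around a full relational oriented cycle $C$. Your preliminary observations (no strictly relational vertices, all relations of length two, consecutive ELIS relations overlapping in one arrow) are also correct and consistent with Remark \ref{rek:type b}.

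The gap is in the two ``escape mechanisms'' that are supposed to rule out the cycle. For an interior step of an ELIS, the continuation condition is purely combinatorial: given $r_i=\beta\gamma$ at a vertex $w$ of $C$, the next term must be $r_{\nota}^{\u}(\s(r_{i-1}))=r_{\nota}^{\u}(w)$, and on the left maximal path $\nota$ that follows $C$ this is exactly $\beta'\beta$ for the cycle-predecessor $\beta'$ of $\beta$, which ends at $w$ and left-intersects $\beta\gamma$ because $C$ is fully relational. A second \emph{outgoing} arrow at $w$ never enters this condition; the injectivity of the summand $D$ of $\mho_1(P(w))$ in Proposition \ref{prop:0-cosyzygy-I} concerns the projective \emph{at $w$}, not the cosyzygies of the $P(v)$ whose ELIS you are following, so ``the ELIS stops at $w$'' does not follow. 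Likewise a second \emph{incoming} arrow $\delta$ at $w$ only adds another element to $\lmpset$; it does not remove the cycle-following left maximal path, and $\length(\ELIS(P(v)))$ is a supremum over all of them, so ``the backward extension escapes along $\delta$'' is not forced. In short, no local analysis at an arbitrary vertex of $C$ can break the winding. What actually kills the cycle is the \emph{anchor}: since the predecessor map is injective, once any $r_i$ lies on $C$ all of $r_{i-1},\dots,r_1$ do, hence $v$ itself lies on $C$ and $\M^{-1}(P(v))=p^{-1}\alpha_1$ with a single arrow $\alpha_1$ in the cycle direction (or $P(v)$ is already injective); one then checks that the conditions (a)/(b) defining an effective pair fail already for $(r_2,r_1)$. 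This is the step your proof is missing, and it is exactly where the paper's proof does its work.
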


\begin{proof}
Let $A=\kk\Q/\I$ be a gentle algebra and $(\alpha_{n+1}\alpha_{n},\ldots, \alpha_2\alpha_1)$ be an ELIS with respect to indecomposable projective $A$-module $P(v)$.
Next, we prove that all relations are not on any full relational oriented cycle, where a full relational oriented cycle is an oriented cycle $C=a_0a_1\cdots a_{l-1}$ such that $a_{\overline{i}}a_{\overline{i+1}}\in\I$ ($\overline{i}$ is $i$ modulo $l$).

Indeed, if $\alpha_{i+1}\alpha_i$ is a relation on $C$,
then $\alpha_{i-1}$ is an arrow on $C$. Otherwise, by the definition of gentle algebra, we have $\alpha_i\alpha_{i-1}\notin\I$.
Thus, by induction, we obtain that $\alpha_{n+1}\alpha_n$, $\ldots$, $\alpha_2\alpha_1$ are relations on $C$
and $v = \checks{\t(\alpha_2) = \s(\alpha_1)}$ is a vertex on $C$. In other words, $\M^{-1}(P(v)) = \checks{p^{-1}\alpha_1}$,
where $p$ is a path starting at $v$ such that $\checks{\alpha_2p}\notin\I$.
We have the following two cases:
\begin{itemize}
  \item[(1)] If the length of $p$ is greater than or equal to $1$, then $\alpha_3\alpha_2$ does not \checks{satisfy} the condition given in processes (a) and (b).
  \item[(2)] If the length of $p$ is zero, then $\M^{-1}(P(v))=\checks{\alpha_1}$ is injective, and then $\alpha_3\alpha_2$ does not \checks{satisfy} the condition given in processes (a) and (b).
\end{itemize}
Therefore, $(\alpha_3\alpha_2, \alpha_2\alpha_1)$ is not a effective intersection relations,
this is a contradiction.

Furthermore, if there is an arrow $\alpha_{n+2}$ such that $\alpha_{n+2}\alpha_{n+1}\in \I$,
then we obtain that $(\alpha_{n+2}\alpha_{n+1},\ldots, \alpha_2\alpha_1)$ is also an ELIS of $P(v)$ and $\alpha_{n+2}\alpha_{n+1}$ is not a relation on any full relational cycle.
Since $\Q$ is a finite quiver, the lengths of all ELISs of $P(v)$ \checks{have} a supremum.
Thus, all ELISs of $P(v)$ have a supremum. Then, by Theorem \ref{thm:main} (2), $A$ is Gorenstein.
\end{proof}

\begin{example} \rm \label{exp:idA 1}
Let $A$ be the string algebra given in Example \ref{exp:ELIS}.
Then we have $\length(\ELIS(z^{-1}x^{-1}cd))=\infty$,
where $z^{-1}x^{-1}cd$ is the string corresponding to $P(2)$.
Thus, $\id A = \infty$.
\end{example}

\begin{example} \rm \label{exp:idA 2}
Let $A$ be the algebra given by the following quiver $\Q$
\begin{center}
\begin{figure}[H]
\centering
\begin{tikzpicture}
\draw [green!50] [line width=2pt] ( 1, 1) to[out=135,in=-135] ( 1, 3);
\draw [green!50] [line width=2pt] ( 1, 3) to[out=135,in=-135] ( 1, 5);
\draw [green!50] [line width=2pt] (-1, 1) to[out=45,in=-45] (-1, 3);
\draw [green!50] [line width=2pt] (-1, 3) to[out=45,in=-45] (-1, 5);
\draw [ blue!50] [line width=2pt] (-1, 3) to[out=90,in=-180] (1,5);
\draw [ blue!50] [line width=2pt] ( 0, 4) to[out=90,in=135] (3.828,5);
\draw [  red!50] [line width=2pt] ( 1, 3) to[out=90,in= 0] (-1,5);
\draw [  red!50] [line width=2pt] ( 0, 4) to[out=90,in=45] (-3.828,5);
\draw ( 0, 0) node{$1$};
\draw (-1, 1) node{$2$};
\draw[->] (-1+0.2, 1-0.2) -- ( 0-0.2, 0+0.2); \draw ( 0.5, 0.5) node[right]{$a_{2'1}$};
\draw ( 1, 1) node{$2'$};
\draw[->] ( 1-0.2, 1-0.2) -- ( 0+0.2, 0+0.2); \draw (-0.5, 0.5) node[left]{$a_{21}$};
\draw ( 0, 2) node{$3$};
\draw[->] ( 0-0.2, 2-0.2) -- (-1+0.2, 1+0.2); \draw ( 0.5, 1.5) node[right]{$a_{32'}$};
\draw[->] ( 0+0.2, 2-0.2) -- ( 1-0.2, 1+0.2); \draw (-0.5, 1.5) node[left]{$a_{32}$};
\draw (-1, 3) node{$4$};
\draw[->] (-1+0.2, 3-0.2) -- ( 0-0.2, 2+0.2); \draw ( 0.5, 2.5) node[right]{$a_{4'3}$};
\draw ( 1, 3) node{$4'$};
\draw[->] ( 1-0.2, 3-0.2) -- ( 0+0.2, 2+0.2); \draw (-0.5, 2.5) node[left]{$a_{43}$};
\draw ( 0, 4) node{$5$};
\draw[->] ( 0-0.2, 4-0.2) -- (-1+0.2, 3+0.2); \draw ( 0.35, 3.6) node[below]{$a_{54'}$};
\draw[->] ( 0+0.2, 4-0.2) -- ( 1-0.2, 3+0.2); \draw (-0.35, 3.6) node[below]{$a_{54}$};
\draw (-1, 5) node{$6$};
\draw[->] (-1+0.2, 5-0.2) -- ( 0-0.2, 4+0.2); \draw ( 0.5, 4.5) node[right]{$a_{6'5}$};
\draw ( 1, 5) node{$6'$};
\draw[->] ( 1-0.2, 5-0.2) -- ( 0+0.2, 4+0.2); \draw (-0.5, 4.5) node[left]{$a_{65}$};
\draw (-2.414, 5) node{$7$};
\draw[->] (-2.414+0.2, 5) -- (-1-0.2, 5); \draw (-1.714, 4.914) node[above]{$a_{76}$};
\draw ( 2.414, 5) node{$7'$};
\draw[->] ( 2.414-0.2, 5) -- ( 1+0.2, 5); \draw (-3.114, 4.914) node[above]{$a_{87}$};
\draw (-3.828, 5) node{$8$};
\draw[->] (-3.828+0.2, 5) -- (-2.414-0.2, 5); \draw ( 1.714, 4.914) node[above]{$a_{7'6'}$};
\draw ( 3.828, 5) node{$8'$};
\draw[->] ( 3.828-0.2, 5) -- ( 2.414+0.2, 5); \draw ( 3.114, 4.914) node[above]{$a_{8'7'}$};
\end{tikzpicture}
\end{figure}
\end{center}
with $\I = \langle {a_{87}a_{76}a_{65}}, {a_{65}a_{54'}},
{a_{8'7'}a_{7'6'}a_{6'5}}, {a_{6'5}a_{54}},
{a_{65}a_{54}}, {a_{43}a_{32}},
{a_{6'5}a_{54'}}, {a_{4'3}a_{32'}}\rangle$. Then $A$ is a string algebra and $\ELIS(P(5))$ have two elements
\begin{center}
$({a_{87}a_{76}a_{65}}, {a_{65}a_{54'}})$ and
$({a_{8'7'}a_{7'6'}a_{6'5}}, {a_{6'5}a_{54}})$
\end{center}
of lengths two. Thus,
$$\id P(5) = \length(\ELIS(P(5)))+1 = 2+1=3.$$
We can check all ELISs of indecomposable projective $A$-modules and obtain that $\id A = 3$. Thus, $A$ is Gorenstein.
\end{example}


\section*{ }

\subsection*{\textbf{Authors' contributions}}
H. Zhang, D. Liu and Y.-Z. Liu contributed equally to this work.

\subsection*{\textbf{Funding}}
Houjun Zhang is supported by the National Natural Science Foundation of China (No. 12301051) and
the Natural Science Research Start-up Foundation
of Recruiting Talents of Nanjing University of Posts and Telecommunications (No. NY222092).

Dajun Liu is supported by the National Natural Science Foundation of China (No. 12101003) and
the Natural Science Foundation of Anhui province (No. 2108085QA07).

Yu-Zhe Liu is Supported by the National Natural Science Foundation of China (No. 12401042, 12171207),
Guizhou Provincial Basic Research Program (Natural Science) (Grant Nos. ZK[2025]085, ZK[2024]YiBan066),
and the Scientific Research Foundation of Guizhou University (Nos. [2023]16, [2022]53, and [2022]65).


\subsection*{\textbf{Data Availability}} Data sharing not applicable to this article as no datasets were generated or analysed during the current study.

\subsection*{\textbf{Declarations}}

\subsection*{\textbf{Ethical Approval}} Not applicable.

\subsection*{\textbf{Acknowledgements}}
We are extremely grateful to referees for his/her meticulous review and the valuable comments and suggestions, which greatly improve the quality of our article.





\end{document}